\titleformat*{\section}{\bfseries\scshape\Large}
\titleformat*{\subsection}{\bfseries\scshape\large}
\titleformat*{\subsubsection}{\itshape}
\newcommand{\tr}{\mathop\mathrm{tr}\nolimits}
\newcommand{\Ad}{\mathop\mathrm{Ad}\nolimits}
\theoremstyle{plain}
\newtheorem{thm}{Theorem} 
\newtheorem{prop}[thm]{Proposition}
\newtheorem{cor}[thm]{Corollary}
\theoremstyle{definition}
\newtheorem{definition}[thm]{Definition}
\theoremstyle{remark}
\newtheorem{remark}[thm]{\bf{Remark}}
\newcommand{\SL}{\mathsf{SL}}
\renewenvironment{abstract}{%
 \setlength{\parindent}{0pt} \par\vskip\baselineskip\darkgray \hrulefill\par \smallskip \centerline{\large\scshape Abstract}\par\medskip}%
 {\par\medskip\hrulefill\bigskip}
\def\myparagraph{\paragraph} 
\begin{document}
\thispagestyle{empty}
{
\setlength{\parindent}{0pt}

{\Huge Point vortices on the hyperbolic plane}

\vskip 2\baselineskip
\color{DimGrey}
\textsc{\Large James Montaldi \& Citlalitl Nava-Gaxiola}\\[6pt]
\textsc{\large University of Manchester}

\vskip \baselineskip

{\large July 31, 2014}
}

\begin{abstract}
We investigate the dynamical system of point vortices on the hyperboloid. This system has non-compact symmetry $SL(2,  R)$ and a coadjoint equivariant momentum map. The relative equilibrium conditions are found and the trajectories of relative equilibria with non-zero momentum value are described. We also provide the classification of relative equilibria and the stability criteria for a number of cases, focusing on 2 and 3 vortices. Unlike the system on the sphere, this system has relative equilibria with non-compact momentum isotropy subgroup, and these are used to illustrate the different stability types of relative equilibria.
\end{abstract}


\section*{Introduction} 
Relative equilibria in systems of point vortices have previously been considered in detail on the plane and on the sphere. A thorough historical summary of research of these studies can be found in \cite{Aref-playground,crystals,{Kidambi}, NewtonBook} for the plane, and \cite{Laurent} for the sphere. 

On the other hand, the case of point vortices on the hyperbolic plane has only been treated briefly in \cite{{crystals}, {Boatto}, {Kimura},{Montaldi1},{Montaldi2}} and in some greater detail in \cite{Hwang, Hwang2}, although none take advantage of the geometry of the conserved quantities.  As on the plane and sphere, the governing equations of the system of point vortices on the hyperbolic plane are Hamiltonian.   Kimura \cite{Kimura} gives a uniform formulation for vortex motion on the sphere (positive curvature) and on the hyperbolic plane (negative curvature) and discusses the motion of vortex dipoles (pairs of vortices with opposite vorticity).   Deforming the phase space rather than the dynamics Boatto \cite{Boatto} and Montaldi and Tokieda \cite{Montaldi2} show how the curvature affects the stability conditions of a ring with $N$ vortices: for a given radius of ring, the ring becomes Lyapunov stable as the curvature decreases and for $N>7$ stability only occurs for negative curvature. Rings of vortices on the hyperbolic plane are also mentioned briefly in \cite{crystals}, and a more in-depth study is provided by Hwang and Kim \cite{Hwang2}. In that paper, they present conditions for  relative equilibria for rings of vortices on the hyperbolic plane, and also show that any two point vortex configuration is a relative equilibrium. 

The fixed and relative equilibria of three point vortices on the hyperbolic plane were first presented by Hwang and Kim in \cite{Hwang}. In the present paper we recover these relative equilibrium conditions using the symmetries of the system. The basic result is that relative equilibria fall into two broad classes: either the configurations form equilateral triangles or the three points lie on a geodesic (we call these geodesic relative equilibria). This is entirely analogous to the situation on the plane or the sphere.

Our principal aim is to study the stability of these relative equilibria, and one of the motivations for this is that the symmetry group $\SL(2,\mathbb{R})$ is not compact. The three conserved quantities form the components of the momentum map $\mathbf{J}$, and the symmetry properties of this map allow one to divide relative equilibria with non-zero momentum into three principal classes: elliptic, parabolic and hyperbolic, according to their momentum value, and this plays an important role in questions of stability. 

The paper is organized as follows.  In Sec.\,\ref{gem} we recall the basic geometry of the hyperbolic plane and its group of symmetries $\SL(2,\mathbb{R})$ and in particular we discuss the coadjoint action of this group, needed for the geometry of the momentum map. In Sec.\,\ref{sec:RE} we begin with a brief discussion of 2 vortices, in which every motion is a relative equilibrium, in order to illustrate the dynamical relevance of the three classes of momentum value. The main part of Sec.\,\ref{sec:RE} discusses the case of 3 point vortices, mostly recovering results from \cite{Hwang}, but adding information on the momentum type.

Finally, in Sec.~\ref{sec:stability} we discuss the different types of stability results for relative equilibria of two and three point vortices. We show that every two point vortex configuration is stable relative to $\SL(2,\mathbb R)$.  However, there is a finer notion of stability, namely stability relative to the subgroup $\SL(2,\mathbb R)_\mu$ (this is the isotropy subgroup for the momentum value $\mu$), and this only holds when the momentum value is elliptic, which in turn is true if the vortex strengths are of the same sign or, if they are of opposite signs, the vortices are not too far apart, see Theorem\,\ref{thm:stability of 2 point vortices}.  

For the stability of relative equilibria of three point vortices, we find remarkably that  an equilateral three vortex configuration has the exact same stability conditions of those for systems on the plane and on the sphere, namely that they are stable whenever $\sum_{i<j} \Gamma_i\Gamma_j>0$; here again stability is relative to the subgroup $\SL(2,\mathbb R)_\mu$. For geodesic relative equilibria the results are incomplete due to the complexity of the equations.  We prove in Theorem \ref{georelt} that the momentum value of any geodesic relative equilibrium is either zero or elliptic, and in Section \ref{stathree} provide some graphs showing the stability regions for isosceles configurations.  

This work is the major part of the PhD thesis \cite{thesis} of one of the authors (CN-G), and details omitted from this paper can be found in the thesis.

\section{Geometry \& equations of motion} \label{gem}

We begin by recalling the hyperboloid model we use for the hyperbolic plane. Alternative models, such as the Poincar\'e disc and the upper half plane are of course equivalent, but the hyperboloid model lends itself to a more straightforward representation of the momentum map. 

\myparagraph{Hyperboloid model}
The \emph{hyperboloid model} $\mathcal{H}_{2}$ of the hyperbolic plane is represented by the upper sheet of the 2-sheeted hyperboloid,
\begin{center}
$\mathcal{H}_{2}=\left\{\left(x,y,z\right) \in \mathbb{R}^3 \mid z^2-x^2-y^2=1, \, z>0\right\},$
\end{center}
with the Riemannian metric $ds^2_{\mathcal{H}_2}=dx^2+dy^2-dz^2$. This metric on $\mathbb R^3$ induces the \emph{hyperbolic inner product} $\langle \cdot, \cdot \rangle_{\mathcal H}$ between $X_1=\left(x_1, y_1, z_1\right)$ and $X_2=\left(x_2, y_2, z_2\right)$ in $\mathbb R^3$ given by 
\begin{equation}
\langle X_1, X_2\rangle_{\mathcal{H}} =x_1x_2+y_1y_2-z_1z_2,
\label{innerpro}
\end{equation}
and the \emph{hyperbolic cross product} 
\begin{equation*}
X_1\times_{\mathcal{H}}X_2=\left(y_1z_2 - z_1y_2,\,z_1x_2 - x_1z_2,\, -x_1y_2 + y_1x_2\right).
\label{cross}
\end{equation*}
Any \emph{geodesic} of this model is given by the curve of intersection of $\mathcal{H}_{2}$ with a plane through the origin \cite{{Cannon}, {Hwang}}. The \emph{hyperbolic distance} $d\left(X_1, X_2\right)$, between $X_1$ and $X_2 \in \mathcal H_2$, is naturally defined as the path length in the hyperbolic metric of the geodesic connecting these two points. A well known result  \cite{{Cannon}} relates the hyperbolic inner product to the hyperbolic distance by 
\begin{equation} \label{eq:cosh relation}
\langle X_1, X_2\rangle_{\mathcal H}= -\cosh \left(d \left(X_1, X_2\right)\right). 
\end{equation}

\myparagraph{Symmetry group of the hyperbolic plane}
The symmetry group of the hyperbolic plane is $\SL(2,\mathbb R)$, the group of real $2\times2$ matrices of unit determinant (in fact $\pm I$ both act trivially, so one usually says the symmetry group is the quotient $\mathsf{PSL}(2,\mathbb R)$, but we ignore this trivial point throughout).  Explicitly, the action in the hyperboloid model is given as matrix multiplication using the map
\begin{equation}\begin{array}{rcl}
\widetilde{  }\,: \SL(2,\mathbb R) &\rightarrow& \mathcal M\left(3, \mathbb R\right) \\
g=\left( 
\begin{array}{cc}
a & b \\ 
c & d%
\end{array}%
\right) &\mapsto & \widetilde g =\frac{1}{2} \left( 
\begin{array}{ccc}
2(ad+bc) & -2(ac-bd) & -2(ac+bd) \\ 
-2(ab-cd) & a^{2}-b^{2}-c^{2}+d^{2} & a^{2}+b^{2}-c^{2}-d^{2} \\ 
-2(ab+cd) & a^{2}-b^{2}+c^{2}-d^{2} & a^{2}+b^{2}+c^{2}+d^{2}
\end{array}%
\right),
\end{array}
\label{slmat}
\end{equation}
where $\mathcal M\left(3, \mathbb R\right)\subset GL(3,\mathbb R)$ is the group of  normalised M\"obius transformations \cite{Marden}.  That is, given $g\in \SL(2, \mathbb R)$ then $g\cdot X=\widetilde gX$. It is well-known that the action of $\SL(2, \mathbb R)$ on the hyperboloid $\mathcal H_2\subset \mathbb R^3$ is transitive and proper. 

The Lie algebra $\mathfrak {sl}\left(2, \mathbb R\right)$ of $\SL(2, \mathbb R)$ is given by the set of $2\times 2$ real matrices with zero trace, and we use the basis of $\mathfrak {sl}\left(2, \mathbb R\right)$ given by
\begin{equation}
\mathcal B=\left\{e_1=\left(
\begin{array}{c c}
1 & 0 \\
0 & -1
\end{array}\right), e_2=\left(
\begin{array}{c c}
0 & 1 \\
1 & 0
\end{array}\right), e_3=\left(
\begin{array}{c c}
0 & 1 \\
-1 & 0
\end{array}\right)\right\}. \label{basisliealg}
\end{equation}
Furthermore, one can identify the dual space $\mathfrak{sl}(2,\mathbb R)^*$ with the same set of trace zero $2\times 2$ matrices using the natural pairing 
\begin{equation}
\langle \mu,\, \xi\rangle=\frac{1}{2}\tr \left(\xi \mu\right).
\label{pairing}
\end{equation} 
The basis $\mathcal{B}'$ of $\mathfrak{sl}(2,\mathbb R)^*$ dual to $\mathcal B$ is  then given by the transposes of the elements of $\mathcal{B}$. Throughout, for elements of $\mathfrak{sl}(2,\mathbb R)$ and its dual we identify $\check{\rho}=\left(x, y, z\right)$ in $\mathbb R^3$ with a $2\times 2$ traceless matrix $\rho$ by 
\begin{equation}
\check{\rho}=\left(\begin{array}{c}
x \\ y\\ z \end{array}\right) \longleftrightarrow 
\rho=\left(
\begin{array}{c c}
x & y+z\\
y-z & -x 
\end{array}
\right).
\label{ident}
\end{equation}

This choice of basis and the vector space isomorphism (\ref{ident}) clearly associates $\xi \in \mathfrak {sl}\left(2, \mathbb R\right)$ and $\mu \in \mathfrak{sl}\left(2, \mathbb R\right)^*$ with $\check \xi$ and $\check \mu\in \mathbb R^3$, respectively. The matrix commutator then satisfies $\left[\xi, \eta \right]\check{ }=-2\left(\check \xi \times_{\mathcal H}\check \eta \right)$, and therefore (\ref{ident}) is a Lie algebra isomorphism, hence $ \mathfrak{sl}\left(2, \mathbb R\right)\cong \mathbb R^3$ with (-2 times) the hyperbolic cross product. Note also that $\det\rho = -\|\check\rho\|_{\mathcal H}^2$.

\myparagraph{Coadjoint action of $\SL(2,\mathbb R)$}
The non-degeneracy of the trace pairing (\ref{pairing}) implies that the adjoint and coadjoint actions of $\SL(2, \mathbb R)$ on $\mathfrak{sl}\left(2, \mathbb R\right)$ and $\mathfrak{sl}\left(2, \mathbb R\right)^*$ are equivalent. The coadjoint action of $g \in \SL(2, \mathbb R)$ on $\mu \in\mathfrak{sl}\left(2, \mathbb R\right)^*$ is given by matrix multiplication,
\begin{equation}
\Ad_{g^{-1}}^*\mu = g\mu g^{-1}.
\end{equation}

Using the basis $\mathcal B'$ and the representation (\ref{slmat}), this action becomes simply,
\begin{equation} \label{mobact}
\bigl(\Ad_{g^{-1}}^*\mu\bigr)\check{\ } = \widetilde g \check \mu,
\end{equation} 
and the coadjoint orbits are (the connected components of) the level sets of $\det\mu$ or equivalently of $\Vert\check\mu\Vert_{\mathcal H}$.

In the following theorem we show that every $\mu \neq 0$ in $\mathfrak {sl}\left(2, \mathbb R\right)^*$ has a coadjoint isotropy subgroup $SL\left(2,\mathbb R\right)_\mu$ which is a 1-parameter subgroup generated by a M\"obius transformation. We define the \emph{type} of $\mu$ as follows: $\mu$ is said to be \emph{elliptic, hyperbolic or parabolic} whenever $\SL(2, \mathbb R)_\mu$ is generated by an elliptic, hyperbolic or parabolic M\"obius transformation, respectively. 

\begin{figure}
        \centering
        \begin{subfigure}{.3\textwidth}
                \centering
                \includegraphics[width=.85\textwidth]{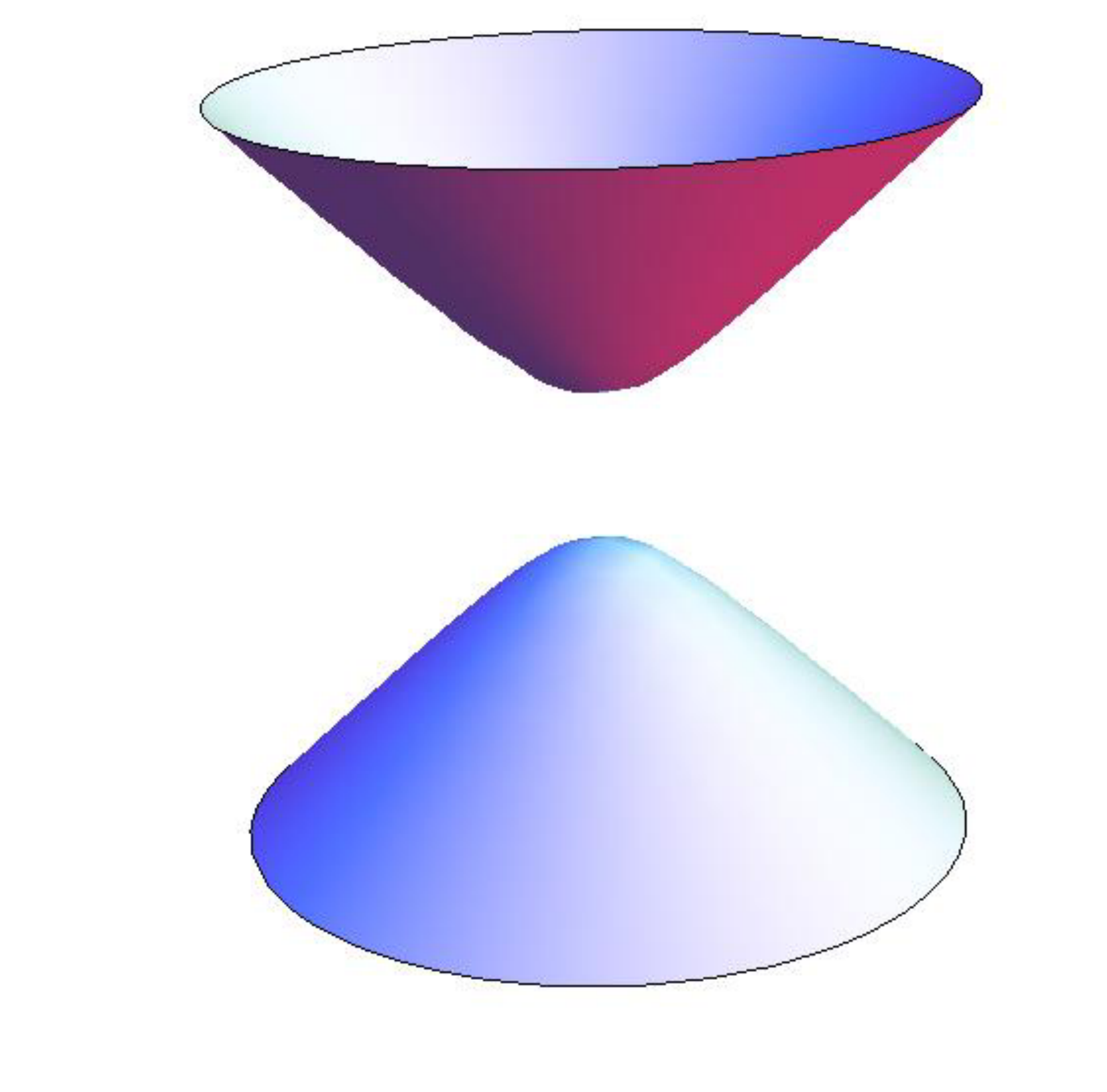}
                \caption{Each sheet of the two-sheeted hyperboloids is the coadjoint orbit for a $\check \mu$ inside $\mathfrak{C}$, that is $\det \mu>0$ and $\mu$ is {elliptic}.}
                \label{fig:posco}
        \end{subfigure}\hfill
        \begin{subfigure}{.3\textwidth}
                \centering
                \includegraphics[width=.85\textwidth]{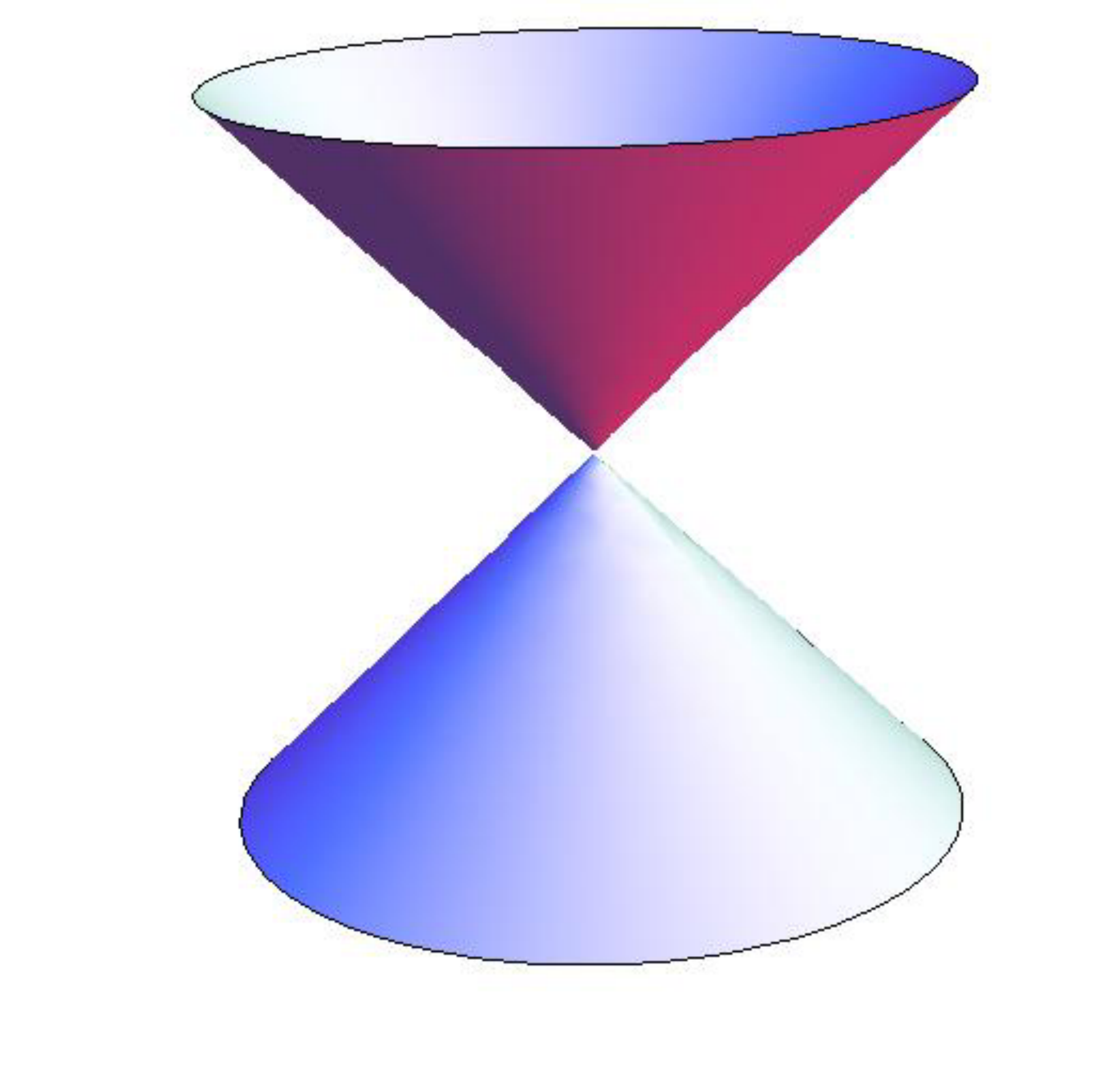}
                \caption{The null-cone $\mathfrak C$ without the origin is the union of the two coadjoint orbits for $\mu \neq 0$ such that $\det \mu=0$ and $\mu$ is parabolic. The origin itself is the coadjoint orbit of $\check \mu=0$. }
               \label{fig:zeroco}
        \end{subfigure}\hfill
        \begin{subfigure}{.3\textwidth}
                \centering
                \includegraphics[width=.85\textwidth]{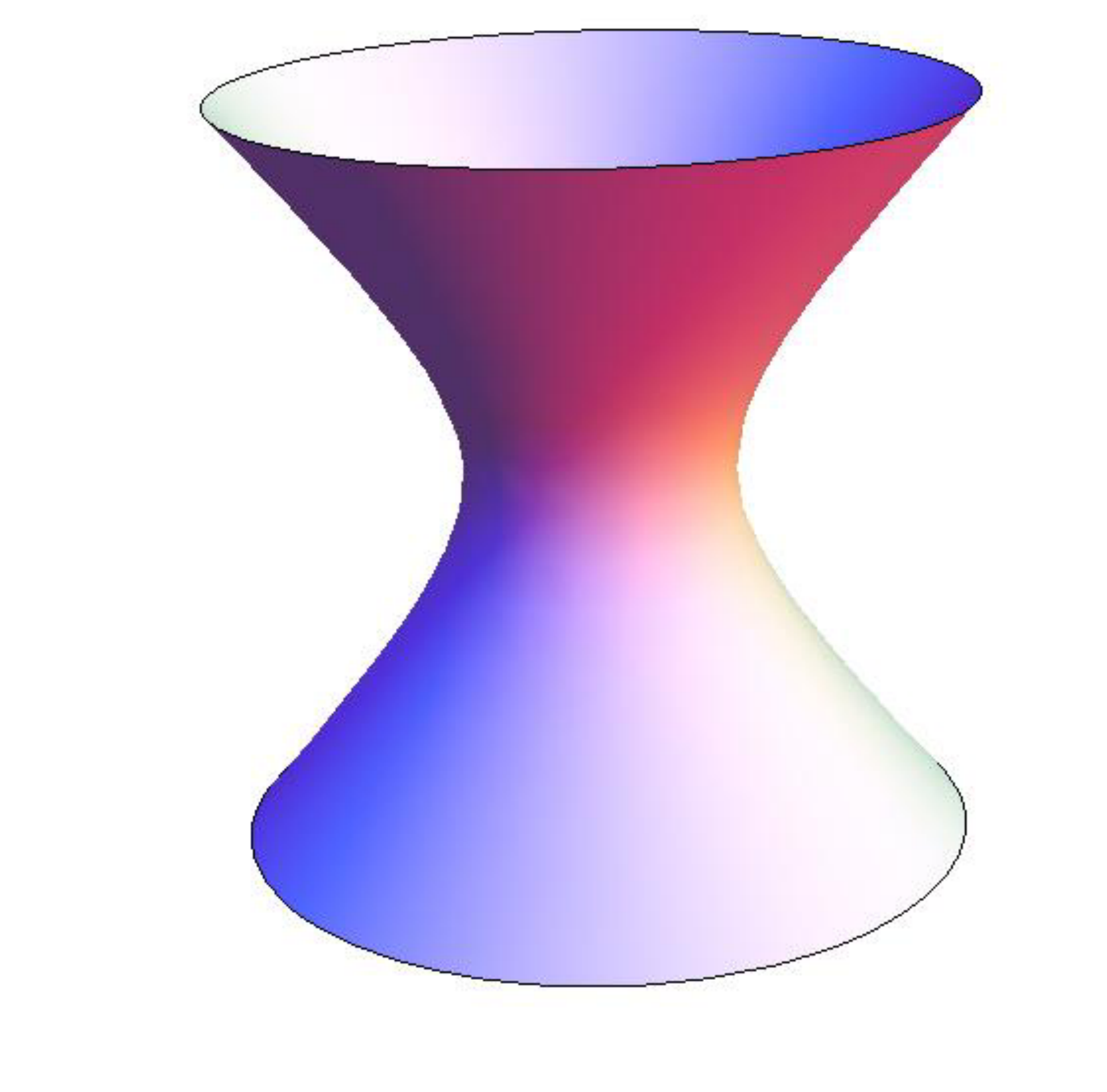}
                \caption{The coadjoint orbit for $\check \mu$ outside of $\mathfrak C$ is a hyperboloid of one sheet.  In this case $\det \mu<0$ and $\mu$ is hyperbolic.}
               \label{fig:negco}
        \end{subfigure}
        \caption{Coadjoint orbits of the action of $\SL(2, \mathbb R)$ in $\mathbb R^3$.}
       \label{coad}
\end{figure}

\begin{thm} 
\label{coadorb}
Let $\mathfrak C$ be the cone $\mathfrak C=\left\{\mu\in  \mathfrak{sl}\left(2, \mathbb R\right)^* \mid \det \mu=0\right\} \simeq \{X\in\mathbb R^3 \mid \|X\|_{\mathcal H}=0\}$. Then for $\mu\in  \mathfrak{sl}\left(2, \mathbb R\right)^*$ the coadjoint isotropy subgroups $\SL(2, \mathbb R)_\mu$ and coadjoint orbits are classified as follows:
\begin{enumerate}
	\item If\/ $\det \mu >0$ then  $\SL(2, \mathbb R)_\mu\cong SO(2,\mathbb{R})$, the type of $\mu$ is elliptic, and the coadjoint orbit is one sheet of the hyperboloid of two sheets shown in Figure \ref{fig:posco}.
		
	\item If\/ $\mu=0$ then $\SL(2, \mathbb R)_\mu=\SL(2,\mathbb{R})$ and the coadjoint orbit is the origin.
	
	\item If\/ $\det \mu=0$ and $\mu\neq0$ then $\SL(2, \mathbb R)_\mu\cong\left\{ \left( 
\begin{array}{cc}
1 & t \\ 
0 & 1%
\end{array}%
\right), t\in \mathbb{R}\right\}$. Here $\mu$ is parabolic and the coadjoint orbit is one sheet of\/ $\mathfrak{C}$ with the origin removed.
 \item If\/ $\det \mu<0$ then $\SL(2, \mathbb R)_\mu\cong \left\{ \left( 
\begin{array}{cc}
t & 0 \\ 
0 & t^{-1}%
\end{array}%
\right), t\in \mathbb R^{+} \right\}$,  $\mu$ is hyperbolic and the coadjoint orbit is a one sheeted hyperboloid as shown in Figure \ref{fig:negco}.
\end{enumerate}
Here $\cong$ means conjugate subgroups of\/ $\SL(2, \mathbb R)$. 
\end{thm}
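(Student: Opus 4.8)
The plan is to lean on the equivalence of the adjoint and coadjoint actions established just above, so that throughout I replace $\Ad^*_{g^{-1}}\mu = g\mu g^{-1}$ by the ordinary conjugation action $\xi\mapsto g\xi g^{-1}$ on trace-zero matrices, whose only invariant is $\det\mu = -\|\check\mu\|_{\mathcal H}^2$ (preserved since $\det(g\mu g^{-1})=\det\mu$). The characteristic polynomial of a traceless $\mu$ is $\lambda^2+\det\mu$, so the sign of $\det\mu$ controls the conjugacy class through the eigenvalues $\pm\sqrt{-\det\mu}$: purely imaginary when $\det\mu>0$, a repeated eigenvalue $0$ with $\mu$ nilpotent when $\det\mu=0$ and $\mu\neq0$, and real distinct eigenvalues when $\det\mu<0$. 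This is exactly the four-case split of the theorem, and the text has already recorded that the coadjoint orbits are the connected components of the level sets of $\det$; what remains is to pin down the isotropy subgroups, their M\"obius type, and the matching of each component to its surface.

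The core computation is to pick a normal form on each level set and solve $g\mu_0=\mu_0g$ in $\SL(2,\mathbb R)$ for its centralizer. I would take $\mu_0=z\,e_3$ for $\det\mu>0$ (here $\check\mu=(0,0,z)$ sits inside $\mathfrak C$), a nilpotent such as $\begin{pmatrix}0&1\\0&0\end{pmatrix}$ for $\det\mu=0$, and $\mu_0=s\,e_1$ for $\det\mu<0$ (here $\check\mu=(s,0,0)$), with $\mu=0$ handled trivially. Writing $g=\begin{pmatrix}a&b\\c&d\end{pmatrix}$ and equating $g\mu_0$ with $\mu_0 g$ forces, respectively, $a=d,\ c=-b$ (giving $SO(2,\mathbb R)$), then $c=0,\ a=d=\pm1$ (giving the upper unipotent subgroup), then $b=c=0$ (giving the diagonal subgroup); for $\mu=0$ the stabilizer is all of $\SL(2,\mathbb R)$. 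Since orbits equal level-set components, every $\mu$ with a given sign of $\det\mu$ is conjugate to the corresponding $\mu_0$, so these centralizers are the general isotropy subgroups up to conjugacy, and each has dimension $1$ when $\mu\neq0$.

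I would then read off the type from the standard M\"obius classification applied to a generator of each one-parameter subgroup via its trace: $|\tr|<2$ on $SO(2,\mathbb R)$ gives \emph{elliptic}, $|\tr|=2$ on the unipotent subgroup gives \emph{parabolic}, and $|\tr|>2$ on the diagonal subgroup $\mathrm{diag}(t,t^{-1})$ gives \emph{hyperbolic}, matching the three named cases. Finally I would match each sign of $\det$ to the correct picture in Figure \ref{coad}: $\det\mu>0$ is the interior of the cone, a two-sheeted hyperboloid whose two sheets are separate orbits because the sign of the $z$-component is a continuous invariant of the connected group $\SL(2,\mathbb R)$; $\det\mu=0,\ \mu\neq0$ is the cone minus the origin, whose two nappes are likewise separate orbits; and $\det\mu<0$ is the one-sheeted hyperboloid, which being connected is a single orbit. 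If one prefers not to take the orbit--component identification as given, it follows from the observation that each nonzero orbit is a $2$-dimensional immersed submanifold of the smooth $2$-manifold $\{\det=\det\mu\}$ (the gradient of $\det$ vanishes only at the origin), hence open in it, and therefore also closed since its complement is a union of further open orbits.

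I expect the genuine subtleties to lie not in the linear algebra but in two pieces of bookkeeping. First, in the parabolic and hyperbolic cases the full centralizer has two components interchanged by $-I$; since $-I$ acts trivially, the isotropy subgroup is, as stated, the connected one-parameter subgroup (equivalently one passes to $\mathsf{PSL}$), and this point must be made explicitly to justify the $t\in\mathbb R^+$ parametrisation in case (4). Second, the orientation argument distinguishing the two sheets and the two nappes is the only step where connectedness of $\SL(2,\mathbb R)$ is essential; without it one could not conclude that an orbit lies in a single component. The centralizer computations and the trace test for the type are routine once the normal forms are fixed, and reducing to conjugation avoids any need to manipulate the explicit $3\times3$ matrix $\widetilde g$.
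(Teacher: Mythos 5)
Your proof is correct and follows essentially the same route as the paper: both reduce each sign of $\det\mu$ to a normal form (your $e_3$, the nilpotent, and $e_1$ are exactly the paper's base points $(0,0,1)$, $(0,1,1)$, $(1,0,0)$ under the identification (\ref{ident})) and then compute the stabilizer of that normal form, the paper reaching the normal form via transitivity of the action on the model surface where you use the eigenvalue classification and an open--closed orbit argument. If anything you are more careful than the paper on two points it glosses over: the second component of the centralizer generated by $-I$ in the parabolic and hyperbolic cases, and the justification that orbits fill out whole connected components of the level sets of $\det$.
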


\begin{proof}
The case $\mu=0$ is trivial. For $\mu \neq 0$, the proof consists in showing that given $X_1$ with the same sign of determinant then $G_\mu \cong G_{X_1}$. 

Consider $\mu$ with positive determinant, that is, the vector $\check \mu$ is inside the null-cone $\mathfrak C$. The null-cone $\mathfrak C$ is asymptotic to $\mathcal H_2$ and is the boundary of all vectors of this type. Therefore the line through $\check \mu$ intersects $\mathcal H_2$ at some point $\check \mu^\prime$, and there always exists $k\neq 0$ such that $\check \mu ^\prime =k \check \mu \in \mathcal H_2$, which consequently implies $G_\mu = G_{\mu^\prime}$. 

Let $X_1 =\left(0, 0, 1\right) \in \mathcal H_2$. It is not hard to show that there exists $g\in SL\left(2,\mathbb R\right)$ such that $g\cdot X_1=\Ad_g X_1=\mu ^\prime$. This implies $g G_{X_1} g^{-1}=G_{\mu ^\prime}$ and $G_{X_1}\cong G_{\mu ^\prime}= G_\mu$. The result is now easily obtained by calculating the isotropy subgroup of $\check {X_1}=\left(0, 0, 1\right) \in \mathcal H_2$.

A similar argument works in the other two cases, taking $\check X_1=(0,1,1)$ for the parabolic case and $\check X_1=(1,0,0)$ for the hyperbolic case.
\end{proof}

There is an appealing geometric description of the $\SL(2,\mathbb R)_\mu$-orbits in $\mathcal H_2$ as follows.  Let $P_\mu$ denote the hyperbolic normal plane passing through $\check \mu$,
$$P_{\mu}:=\left\{\check X\in\mathbb{R}^{3} \mid 
\left\langle \check X - \check \mu,\;\check \mu \right\rangle_{\mathcal{H}} =0\right\}.$$  
It follows from $\left(\ref{mobact}\right)$, and the fact that the inner product $\left(\ref{innerpro}\right)$ is invariant under the coadjoint action that $P_{\mu}$ is invariant under the action of $\widetilde g$ for $g\in \SL(2, \mathbb R)_\mu$.

Since $\mathcal H_2$ is itself invariant under the coadjoint action, a simple  consequence of this is that the curve $P_{\mu} \cap \mathcal H_2$ remains invariant under the coadjoint action of $\SL(2, \mathbb R)_\mu$. It follows that if $\nu\in P_{\mu} \cap \mathcal H_2$ then the $\SL(2,\mathbb R)_\mu$-orbit of $\nu$ is this curve of intersection.  Furthermore, this curve is a conic of the type (ellipse, hyperbola or parabola) related to its isotropy group, and coincides with the type of $\mu$ as defined in Theorem\,\ref{coadorb}. This can be seen in Figure \ref{conics}.  For other points $\nu\in\mathcal H_2$, one can replace $\mu$ by a scalar multiple of $\mu$ (which necessarily has the same isotropy group), which replaces $P_\mu$ by a parallel plane, whose intersection with $\mathcal H_2$ is the $\SL(2,\mathbb R)_\mu$-orbit of $\nu$.

\begin{figure}
        \centering
        \begin{subfigure}[b]{0.30\textwidth}
                \centering
                \includegraphics[width=\textwidth]{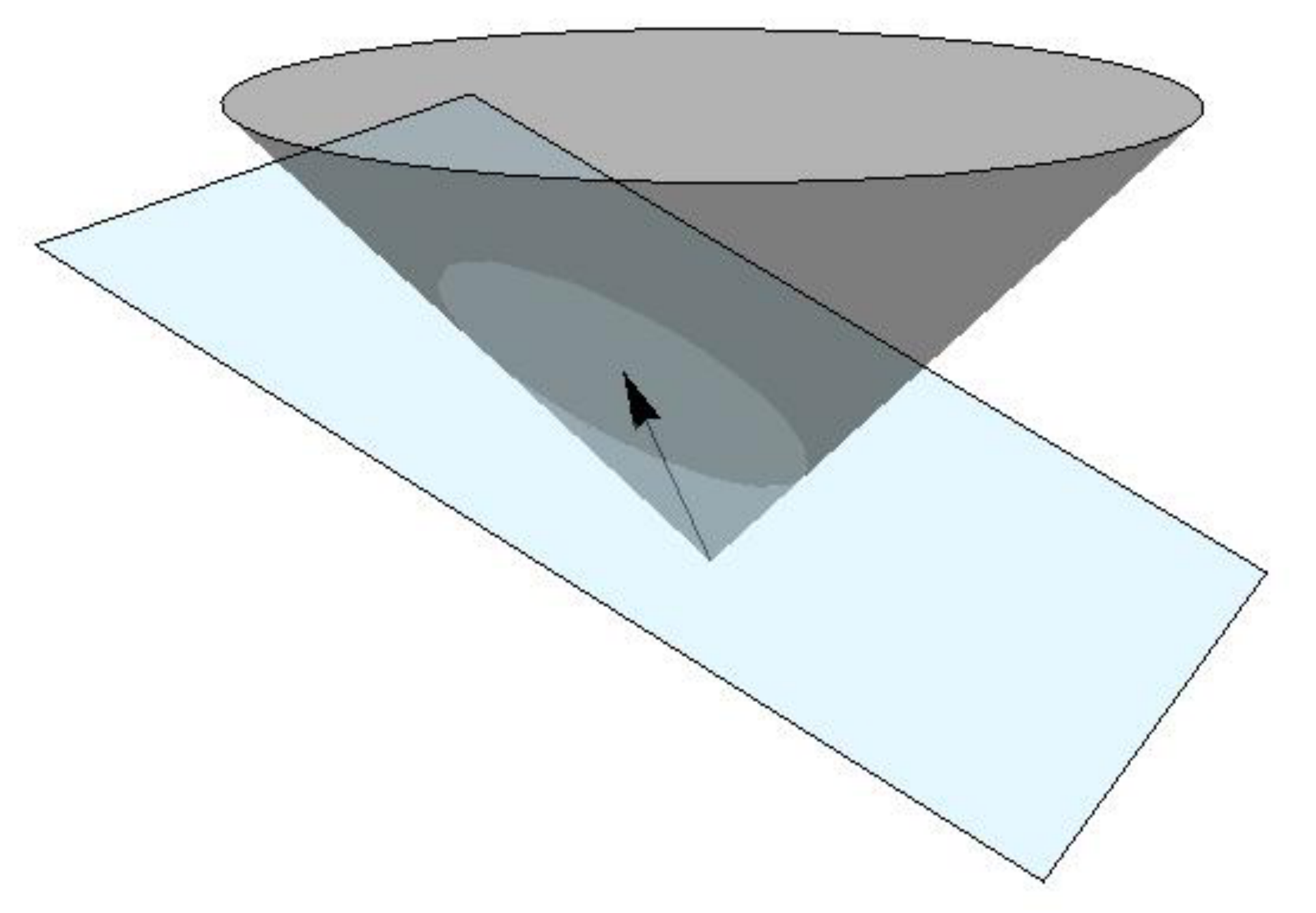}
                \caption{$\check \mu$ inside $\mathfrak{C}$ (elliptic)}
                \label{fig:ellip}
       
        \end{subfigure}
        ~ 
                \begin{subfigure}[b]{0.32\textwidth}
                \centering
                \includegraphics[width=\textwidth]{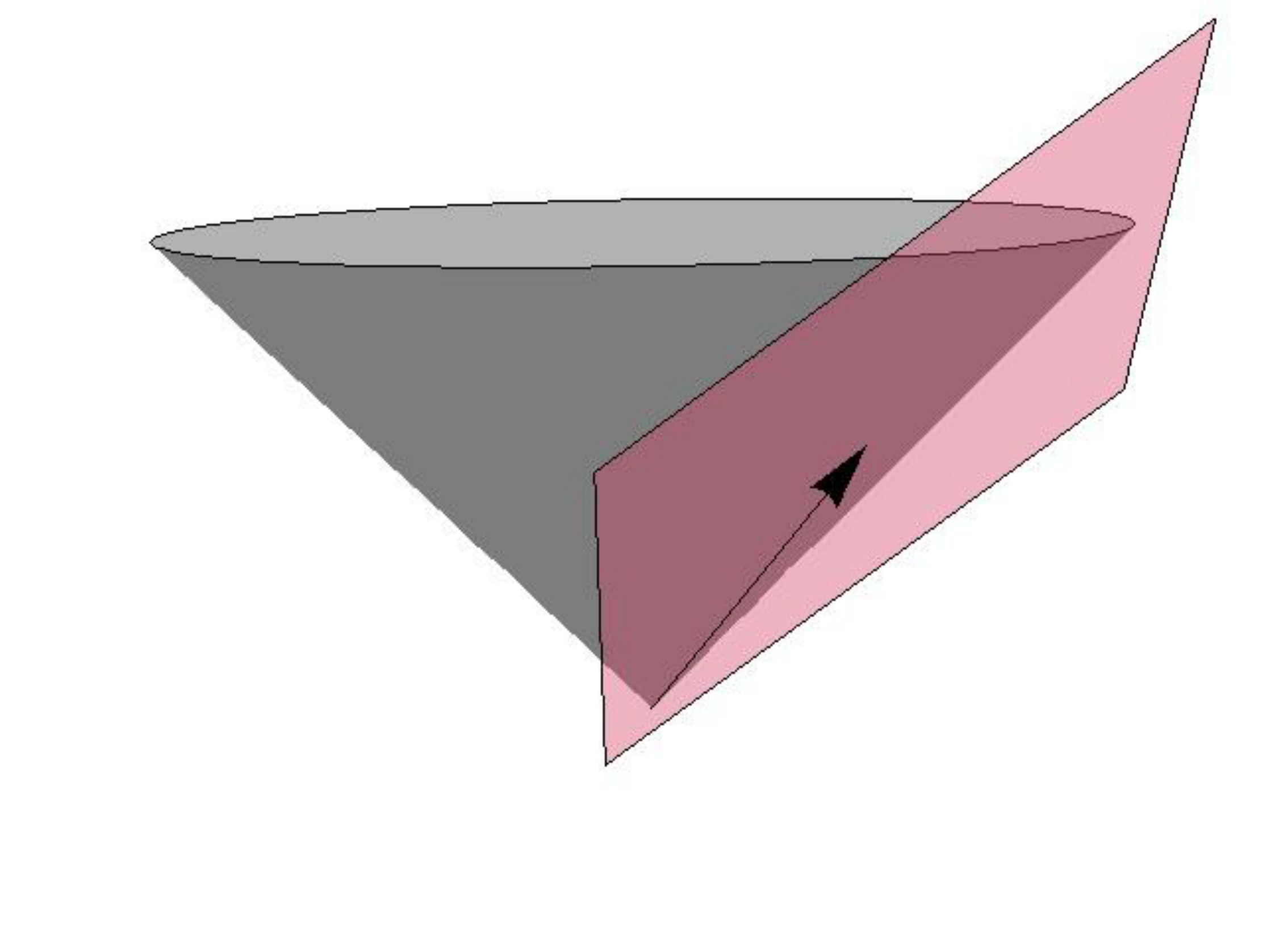}
                \caption{$\check{\mu}$ on $\mathfrak{C}$ (parabolic)}
                \label{fig:parab}
        \end{subfigure}
        ~ 
        \begin{subfigure}[b]{0.34\textwidth}
                \centering
                \includegraphics[width=\textwidth]{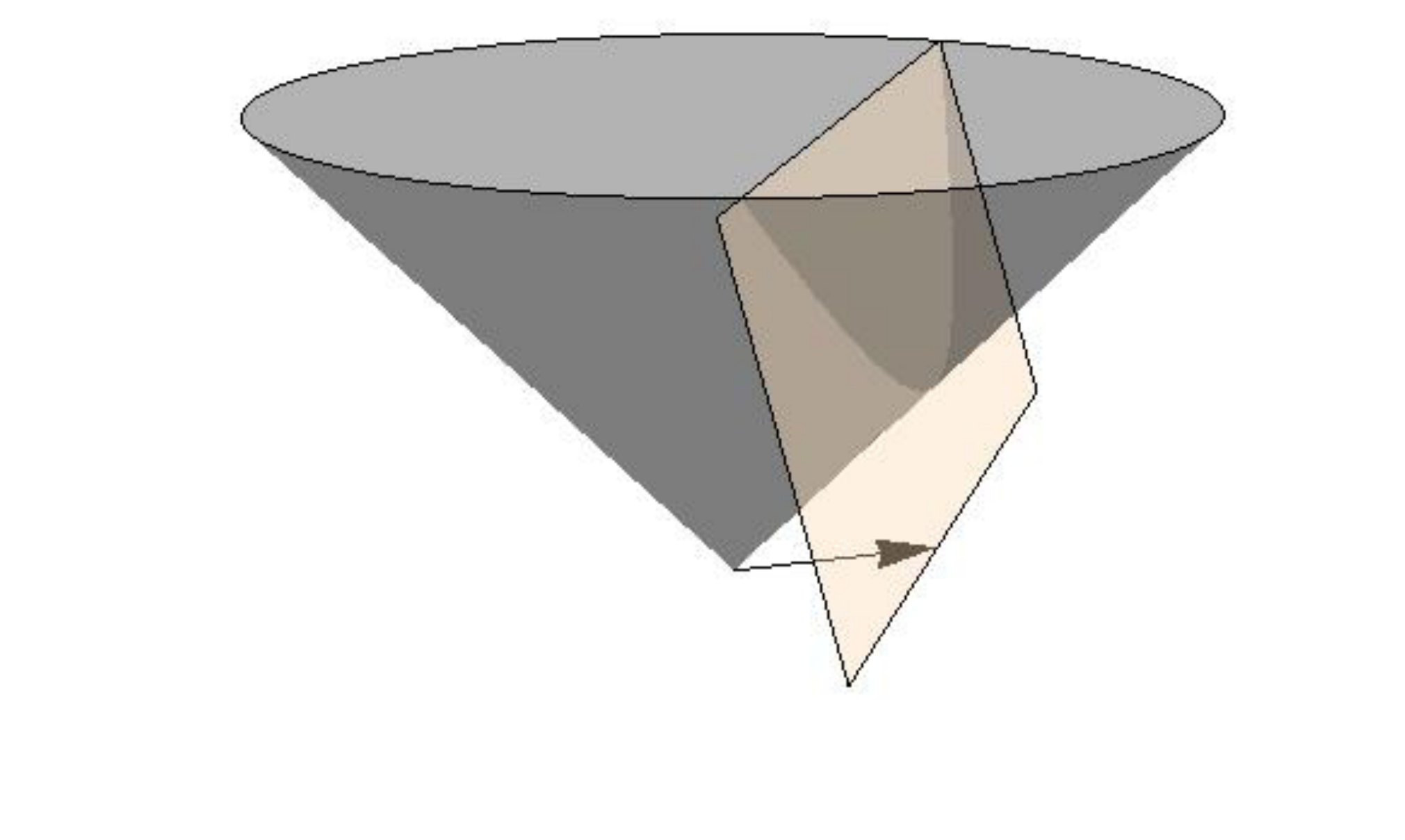}
                \caption{$\check{\mu}$ outside $\mathfrak{C}$ (hyperbolic)}
               \label{fig:hyperb}
          
        \end{subfigure}
  \caption{Intersection of the hyperbolic normal plane $P_{\mu}$ with $\mathcal{H}_2$, classified in terms of the position of $\check \mu$ with respect to the null-cone $\mathfrak C=\left\{\check \mu \mid \det \mu=0\right\}$.}
  \label{conics}
\end{figure}

\myparagraph{Phase space} We now return to the system of $N$ point vortices on the hyperbolic plane $\mathcal H_2$. 
Let $\check X_{i}$ be the vector from the origin in $\mathbb R^3$ to the $i^{\mathrm{th}}$ vortex $X_i\in\mathcal H_2$, and denote its vorticity by $\Gamma_{i}$ (assumed to be nonzero). A candidate for the manifold of the dynamical system of $N$ point vortices consists of $N$ copies of the hyperboloid $\mathcal H_2 \times \dots  \times \mathcal H_2$. However, configurations that lead to infinite energy must be avoided, and this is achieved by discarding the set of collisions \[\Delta=\left\{X=\left(X_1, \dots , X_N\right) \in \mathcal H_2 \times \dots \times \mathcal H_2 \mid \text{two or more $X_i$ coincide}\right\}.\] Hence, the phase space is given by $\mathcal M=\mathcal H_2 \times \dots \times \mathcal H_2 \setminus \Delta$.
Neglecting collisions of vortices guarantees the action of $\SL(2, \mathbb R)$ on $\mathcal M$ to be free. 

The dynamics on $\mathcal M$ are determined by the vector field
\begin{equation}\label{velocity}
\dot X_r = \frac{1}{\pi} \sum_{p\neq r}\Gamma_p \frac{\check X_p \times_{\mathcal H} \check X_r}{\left\langle \check X_{r}, \check X_{p} \right\rangle_{\mathcal H}^2-1}
\end{equation}
with $r \in \left\{1,.., N\right\}$.
This equation differs from the differential equations derived by Kimura in \cite{Kimura} by a factor of 2, which is explained by the choice of the basis $\mathcal B$ of the Lie algebra in (\ref{basisliealg}). 

This system is Hamiltonian, where the symplectic structure on $\mathcal H_2$ is derived from the natural Lie-poisson structure on $\mathfrak{sl}(2,\mathbb R)^*$ given by 
$$\{F, G\}(\mu) = \left\langle \mu, \, \left[dF(\mu), \,dG(\mu) \right] \right\rangle,$$ 
where $F,G$ are two smooth functions on $\mathfrak{sl}(2,\mathbb R)^*$, and $dF(\mu)\in(\mathfrak{sl}(2,\mathbb R)^*)^* \simeq \mathfrak{sl}(2,\mathbb R)$. 
The resulting symplectic structure on the coadjoint orbit $\mathcal H_2$, called the Kostant-Kirillov-Souriau (KKS) form is given as follows.  Let $\mu \in \mathcal H_2$ and $u,v\in T_\mu \mathcal H_2$  then the KKS form is 
\begin{equation}
\omega_{\mathcal H_2} \left(\mu\right)\left(u, v\right)=  \frac{\check \mu \cdot \left(\check u\times_{\mathcal H} \check v\right)}{2\Vert \check \mu \Vert^2},
\label{kkshyp}
\end{equation}
where $\Vert \check \mu \Vert^2=\sum_{i=1}^3 \mu_i^2$ is the Euclidean norm. The symplectic from on $\mathcal M$ depends on the vorticities as
 \begin{equation} \omega_\mathcal M\left(\cdot, \cdot\right)=\sum_{i=1}^{N} \Gamma_i \omega_{\mathcal H_2}\left(\cdot, \cdot\right)_i.\label{sympn}
\end{equation}

The Hamiltonian for the system is as constructed by Kimura \cite{Kimura}, which in terms of the hyperbolic inner product is given by 
\begin{equation}
H=-\frac{1}{4\pi}\sum{\Gamma_i \Gamma_j \ln\frac{\langle \check X_i, \check X_j\rangle_{\mathcal H}+1}{\langle \check X_i, \check X_j\rangle_{\mathcal H}-1}}.
\label{hamil}
\end{equation}
Note that if all vorticities have the same sign, as two point vortices in $\mathcal H_2$ get closer, i.e.\ as the hyperbolic distance between them tends to $0$, the Hamiltonian tends to $\infty$, as expected when a collision occurs. On the other hand if two points get far apart, the hyperbolic distance tends to $\infty$ and the contribution of their interaction to the total energy is $0$.

\myparagraph{Momentum map and its equivariance}
This is central to our analysis, and this is where the hyperboloid model is so useful. The momentum map is $\mathbf{J}:\mathcal{M} \to \mathfrak {sl}\left(2, \mathbb R\right)^*$ is given simply by
\begin{equation}
 \mathbf{J}\left( X_1, \dots , X_N\right)= \sum_{i=1}^N \Gamma_i  X_i.
\label{momentum}
\end{equation}
See for example \cite{Montaldi2} for details, and how the same formula can be made to hold for vortices in the Euclidean plane. 

Whenever the symmetry group is semisimple the momentum map of a symplectic manifold can be chosen to be coadjoint equivariant, as was shown by Souriau \cite{Sou}.  Since the KKS form is invariant and it defines the symplectic structure $\left( \ref{sympn}\right)$, the momentum map $\left(\ref{momentum}\right)$ does satisfy this equivariance, that is
\[\mathbf{J}\left(g\cdot X\right)=\Ad^*_{g^{-1}} \mathbf{J}\left(X\right)\]
for all $g\in \SL(2, \mathbb R)$, $X\in \mathcal M$. By Noether's theorem the momentum map is a conserved quantity under the flow of every invariant Hamiltonian.

\section{Relative equilibria} \label{sec:RE}
Throughout this section we write $G=\SL(2,\mathbb{R})$ for convenience. 
A point $X_e$ in phase space is a \emph{relative equilibrium} if its group orbit is invariant under the dynamics, or equivalently, if the trajectory through $X_e$ is contained in the group orbit.  Since the momentum value $\mu=\mathbf{J}\left(X_e\right)$ is a conserved quantity (by Noether's theorem), the level sets $\mathbf{J}^{-1}\left(\mu\right)$ are invariant under the flow of the Hamiltonian system. This also means that we must restrict our attention to the action of $G_{\mu}$, implying that a configuration $X_e$ is a relative equilibrium if $G_\mu \cdot X_e$ remains invariant.

Consequently, if $\dim G_\mu=1$ then the trajectory of a relative equilibrium follows the $G_\mu$-orbit, and in our point vortex model this occurs whenever $\mu\neq0$.

\subsection{Two point vortices} \label{sec:2 point vortices}

For a two point vortex configuration $X_e=\left(X_1, X_2\right)\in \mathcal M$, the implicit function theorem implies $\dim \mathbf{J}^{-1}\left(\mu\right)=1$. We additionally know that $G_\mu \cdot X_e\subset \mathbf{J}^{-1}\left(\mu\right)$, so they must be (locally) equal, hence $G_\mu \cdot X_e$ is indeed invariant. In conclusion, any two point vortex configuration $X_e$ is a relative equilibrium. 

It also follows that any two vortex configuration has non-zero momentum value $\mu=\mathbf{J}\left(X_e\right)$ (for otherwise $G\cdot X_e$ is 3-dimensional and contained in $\mathbf{J}^{-1}(0)$). Thus, as shown in Figure \ref{trajectories}, the trajectories of $X_1$ and $X_2$ are the conics determined by the determinant of the momentum value $\mu$. 

\begin{figure}
        \centering
        \begin{subfigure}[b]{0.30\textwidth}
                \centering
                \includegraphics[width=\textwidth]{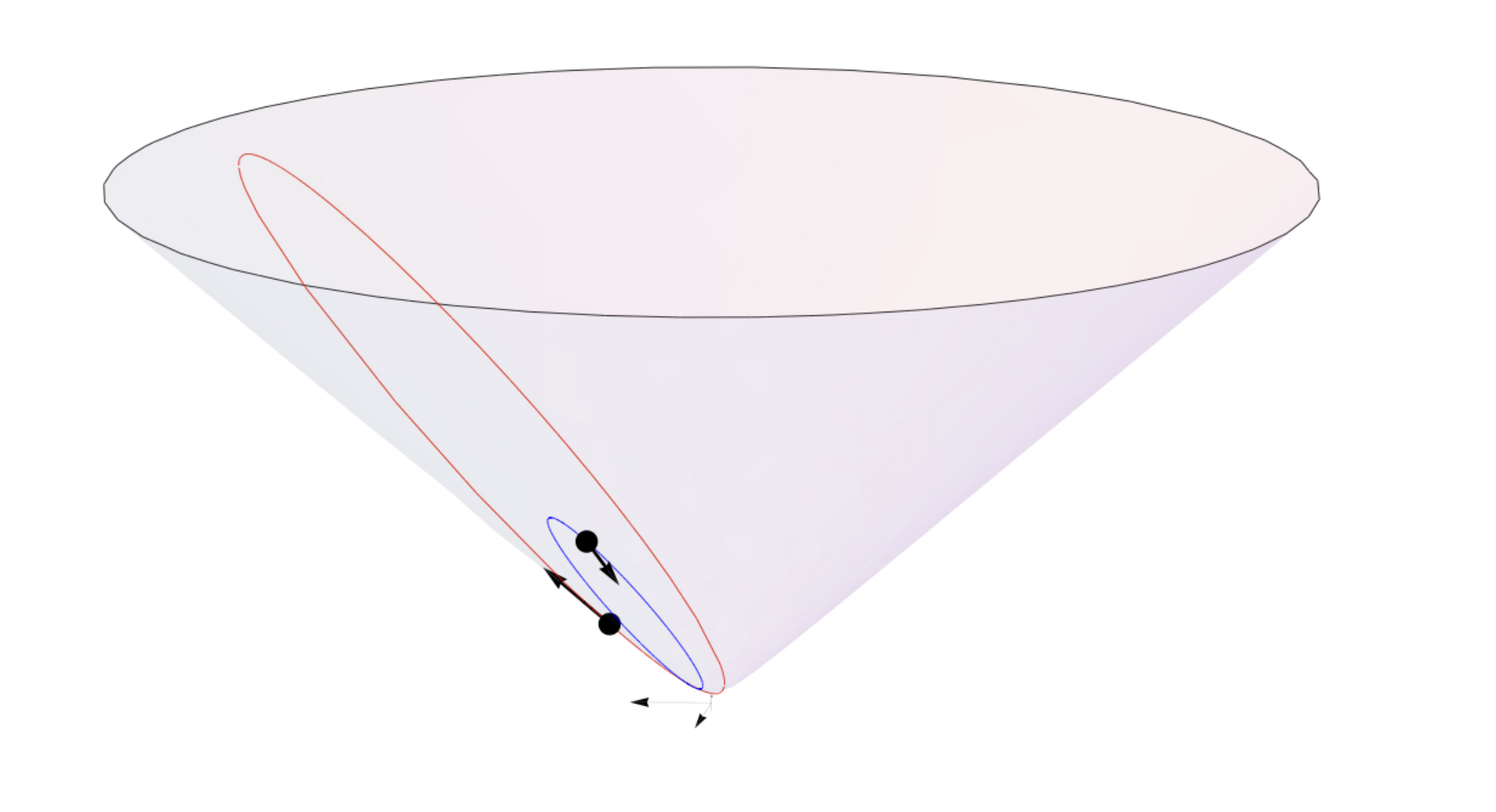}
                \caption{$\check{\mu}$ inside $\mathfrak{C}$, with $\Gamma_1=1$ and $\Gamma_2=3$.}
                \label{trajectory1}
        \end{subfigure}
        \hfill
        \begin{subfigure}[b]{0.30\textwidth}
                \centering
                \includegraphics[width=\textwidth]{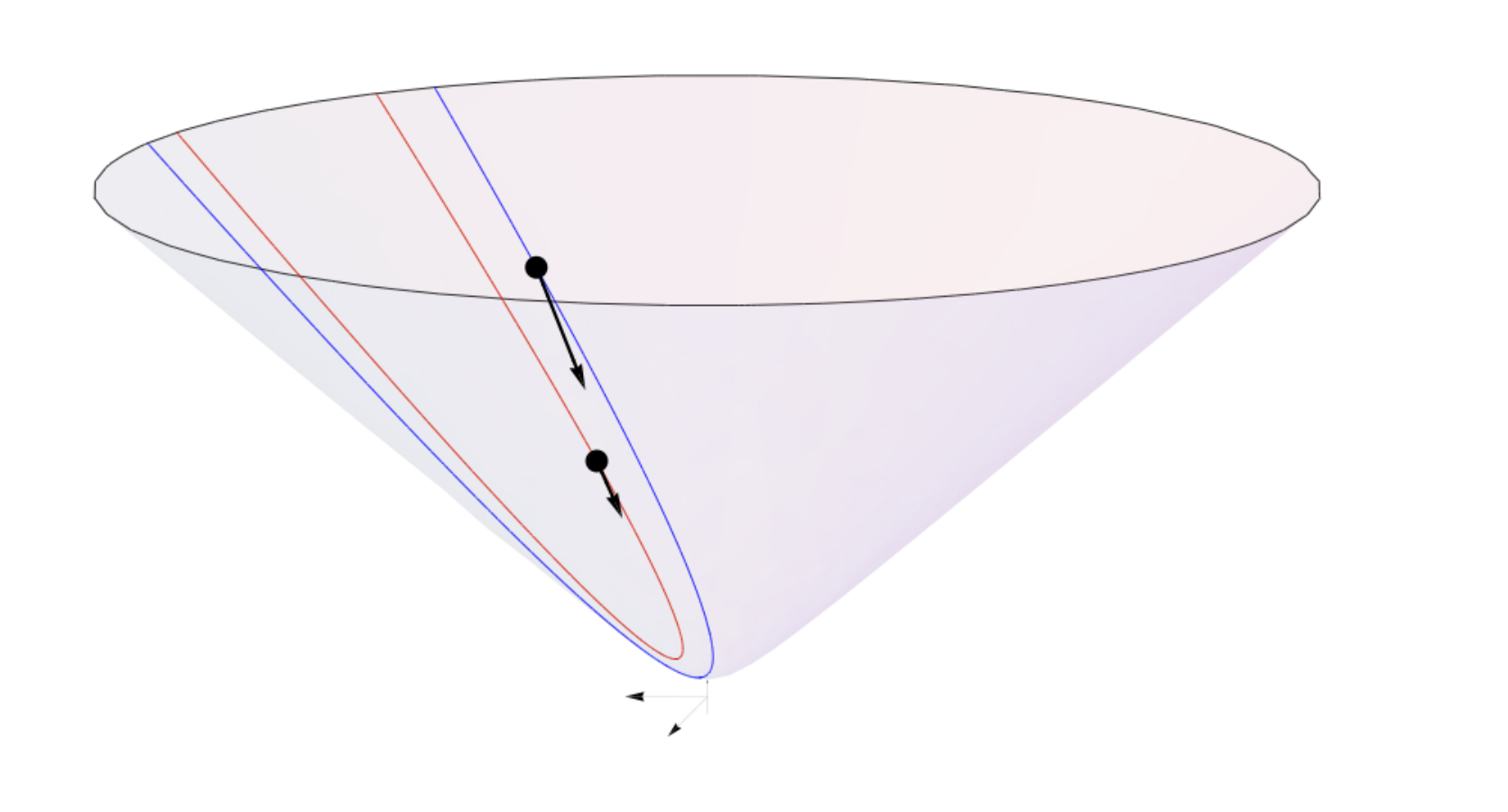}
                \caption{$\check{\mu}$ on $\mathfrak{C}$, with $\Gamma_1=1$ and \\
                $\Gamma_2=-\frac{1}{2}$.}
                \label{trajectory2}
        \end{subfigure}
        \hfill 
        \begin{subfigure}[b]{0.30\textwidth}
                \centering
                \includegraphics[width=\textwidth]{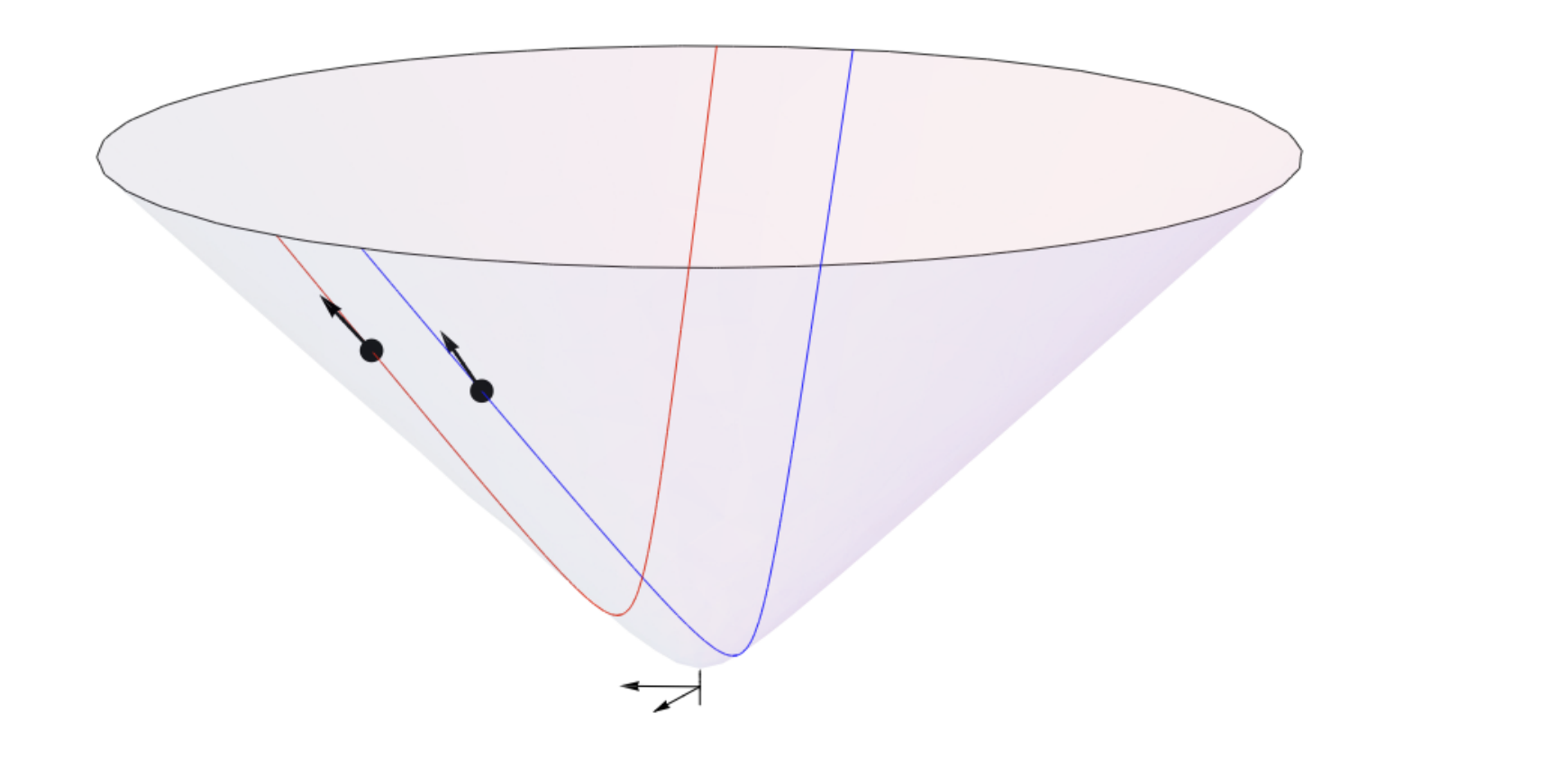}
                \caption{$\check{\mu}$ outside $\mathfrak{C}$, with $\Gamma_1=1$ and $\Gamma_2=-1$.}
               \label{trajectory3}
            \end{subfigure}
               \caption{Trajectories of the vortices for $\check{\mu}$ inside, outside and on the null-cone $\mathfrak{C}$.}
  \label{trajectories}
           \end{figure} 

For example the vortex dipole $\Gamma_1=-\Gamma_2=1$, treated before by Kimura \cite{Kimura} and recently by Hwang and Kim \cite{Hwang2}, $X_e$ has momentum value $\mu$ with determinant less than zero. Therefore $G_\mu$ is related to an hyperbolic M\"obius transformation, and the trajectories of the vortices are on hyperbolas parallel to each other (Figure \ref{trajectory3}) (and as Kimura points out, their midpoint follows a geodesic).

\subsection{Three point vortices}\label{relthreesec}
With more than two vortices the conditions for a relative equilibrium are not so straightforward. Considering the orbit space $\mathcal M/ \SL(2, \mathbb R)$, an invariant group orbit is just a point that is invariant under the dynamics. Thus a relative equilibrium $X_e$ is an equilibrium point in the reduced space. Given that the level sets $\mathbf{J}^{-1}\left(\mu\right)$ are invariant under the flow of Hamiltonian, a relative equilibrium must be a critical point of the restriction $H\vert_{\mathbf{J}^{-1}\left(\mu\right)}$, see for example \cite{Marsden-LoM}. Hence if there exists $\xi \in \mathfrak{sl}\left(2, \mathbb R\right)$ such that $X_e$ is a critical point of the \emph{augmented Hamiltonian} $H_{\xi}(X)=H(X)-\left\langle \mu,\xi\right\rangle$, then $X_e$  is a relative equilibrium.

In practice, we write $H$ as a function on $(\mathbb{R}^3)^N$, so must add the constraint that $X\in \mathcal M$.  We do this by including Lagrange multipliers $\lambda_r$, and thus $X_e=\left(X_1, \dots ,X_N\right)$ is a relative equilibrium if it is a critical point of
\begin{equation}
{H}_{\xi}\left(X_e\right)=
-\frac{1}{4\pi} \sum \limits_{r \neq s}^N\Gamma_{r}\Gamma_{s}\ln\left(\frac{\left\langle X_{r}, {X_{s}}\right\rangle_{\mathcal H}+1}{\left\langle {X_{r}}, {X_{s}}\right\rangle_{\mathcal{H}}-1}\right)- \sum \limits_{i=1}^3\sum \limits_{r=1}^N \tau_i \xi_i \Gamma_r X_r^i+\sum \limits_{r=1}^N \lambda_r \left(\langle {X_r}, {X_r}\rangle_{\mathcal H} +1\right)
\label{extended}
\end{equation}
where ${X_j}=\left(X_j^{1},X_j^{2},X_j^{3}\right)\in \mathcal H_2$, with vortex strengths $\Gamma_j$ (for $j=r,s$) and

\[\tau_i = \left\{ 
\begin{array}{l l}
  1 & \quad \mbox{if $i=1,2$, }\\
  -1 & \quad \mbox{if $i=3$,}\\ \end{array} \right. \]
and $\lambda_r$ is the corresponding Lagrange multiplier. 

Note that $\langle  X_r,  X_s\rangle_{\mathcal H} =\sum_{i=1}^3 \tau_i X^{i}_{r}X^{i}_{s}$, thus $\frac {\partial}{\partial X_r^i} \langle {X_r},{X_s}\rangle_{\mathcal H}= \tau_iX_s^i$, and
\begin{equation}
\frac{\partial {H}_{\xi}}{\partial X_r^i}=\tau_i\left(\frac{\Gamma_r}{
2\pi}\underset{p\neq r}{\sum}\Gamma_p \frac{X_p^i}{\left\langle  X_{r},  X_{p} \right\rangle_{\mathcal H}^2-1} -\Gamma_r \xi_i +\lambda_r X_r^i \right)=0.
\label{firstderiv}
\end{equation}

Therefore the general condition for relative equilibria is given as the solutions of the following equation of \emph{angular velocity $\xi$}
\begin{equation}
\xi_i=\frac{1}{2\pi}\underset{p\neq r} \sum{\Gamma_p \frac{X_p^i}{L_{pr}}}+\frac{\lambda_r}{\Gamma_r}X_r^i,    \qquad \text{$\forall r \in \left\{1,2, \dots N\right\}$ and $i \in\left\{1,2,3\right\},$}
\label{equireleq}
\end{equation}
where $L_{pr}$ denotes $\left\langle X_{p},X_{r}\right\rangle_{\mathcal H}^2-1=\sinh^2(d(X_r,X_p))$. An important observation is that the angular velocity $\xi$ satisfies $\dot{ {X}}_r= \xi \times_{\mathcal H}  {X}_r$. This means that a relative equilibrium $X_e$ rotates "hyperbolically" around $\xi$ as shown in \cite[Proposition 2]{Hwang}.

\myparagraph{Classification} 
In this section we provide the classification of relative equilibria of three point vortices, which is strikingly similar to the classification for the plane and the sphere. The following result is obtained by solving $\left(\ref{equireleq}\right)$ for 
\begin{eqnarray}
 {X_1}&=&\left(x_1,y_1,z_1\right), \nonumber \\
{X_2}&=&\left(0,0,1\right),  \label{threl}
\\
{X_3}&=&\left(x_3,y_3,z_3\right). \nonumber
\end{eqnarray}
Any other set of three point vortices is equivalent to this one by hyperbolic rotations. Since the dynamics are preserved by that type of transformation, the same relative equilibrium conditions follow for any other $X$.

\begin{thm}
\label{threerelt}
Every relative equilibrium of three point vortices in the hyperbolic plane is either an equilateral triangle or a geodesic configuration.
\end{thm}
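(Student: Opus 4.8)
The plan is to read the relative equilibrium condition (\ref{equireleq}) as three vector equations in $\mathbb R^3$, one for each vortex $r$, sharing a common angular velocity $\xi$, and then to run the dichotomy familiar from the planar and spherical cases: either the position vectors $\check X_1,\check X_2,\check X_3$ are linearly dependent in $\mathbb R^3$, or they are linearly independent.

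First I would dispose of the dependent case. If $\check X_1,\check X_2,\check X_3$ are linearly dependent they lie in a common plane through the origin, and since a geodesic of $\mathcal H_2$ is exactly the intersection of such a plane with $\mathcal H_2$ (Section \ref{gem}), this is precisely a geodesic configuration. It therefore remains to show that linear independence forces an equilateral triangle.

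In the independent case $\{\check X_1,\check X_2,\check X_3\}$ is a basis of $\mathbb R^3$, so I would eliminate $\xi$ by subtracting the relative equilibrium equations pairwise and expanding each difference in this basis. Subtracting the equation for $r=1$ from that for $r=2$ and using the symmetry $L_{pr}=L_{rp}$, the coefficient of the remaining vortex $\check X_3$ is $\frac{\Gamma_3}{2\pi}\bigl(\tfrac{1}{L_{13}}-\tfrac{1}{L_{23}}\bigr)$; linear independence forces it to vanish, and since $\Gamma_3\neq0$ this gives $L_{13}=L_{23}$. Subtracting the equations for $r=1$ and $r=3$ gives, in the same way, the coefficient $\frac{\Gamma_2}{2\pi}\bigl(\tfrac{1}{L_{12}}-\tfrac{1}{L_{23}}\bigr)$ of $\check X_2$, hence $L_{12}=L_{23}$. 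Together these yield $L_{12}=L_{13}=L_{23}$, and since $L_{pr}=\sinh^2 d(X_p,X_r)$ the three pairwise hyperbolic distances coincide: the configuration is an equilateral triangle.

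I do not expect a serious obstacle; the content is the elementary linear algebra above, and the only point needing a little care is the bookkeeping of the remaining coefficients, those of $\check X_r$ and $\check X_s$ in each difference, which involve the Lagrange multipliers $\lambda_r,\lambda_s$. These simply determine the admissible multipliers (and, together with any one of the original equations, the angular velocity $\xi$), so they impose no further constraint on the geometry. The geometric identification of linear dependence with the geodesic case is immediate from the plane-section description of geodesics recalled in Section \ref{gem}.
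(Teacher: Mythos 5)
Your argument is correct, and it is worth noting that it is cleaner and more self-contained than what the paper actually does. The paper's proof of Theorem~\ref{threerelt} consists of normalising the configuration to the form (\ref{threl}) using the group action and then solving (\ref{equireleq}) by direct computation in those coordinates, with the details deferred to the thesis \cite{thesis}. You instead keep (\ref{equireleq}) as three vector equations with a common $\xi$ and run the invariant dichotomy on linear (in)dependence of $\check X_1,\check X_2,\check X_3$: dependence means the three points span a plane through the origin (they cannot span a line, as that would force a collision), whose intersection with $\mathcal H_2$ is a geodesic containing all three; independence lets you expand the pairwise differences of the equations in the basis $\{\check X_1,\check X_2,\check X_3\}$ and read off $L_{13}=L_{23}$ and $L_{12}=L_{23}$ from the vanishing of the ``third'' coefficients, while the remaining coefficients merely fix the Lagrange multipliers $\lambda_r=\Gamma_r^2/(2\pi L)$ consistently. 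Since $L_{pr}=\sinh^2 d(X_p,X_r)$, equal $L$'s give equal distances, i.e.\ an equilateral triangle. What your route buys is coordinate-freedom and a visible reason for the dichotomy (coplanarity with the origin versus a genuine basis of $\mathbb R^3$), which is exactly the mechanism underlying the analogous planar and spherical results; what the paper's route buys is that the same normalised computation feeds directly into the explicit formulae (\ref{angeq}) and (\ref{georel}) of the subsequent theorems. I see no gap in your argument.
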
 

The result is obtained by calculating the solutions of (\ref{equireleq}) for configurations of the form given in (\ref{threl}).  

\begin{remark}
As mentioned in the introduction, a geodesic on the hyperboloid model $\mathcal H_2$ is the intersecting curve of a plane through the origin with $\mathcal H_2$. As pointed out in \cite{Hwang}, and unlike the system on the sphere, it is not possible to have an equilateral configuration in a geodesic of the hyperboloid model. Therefore, for an equilateral configuration 
$$V= \left\langle X_1,\, X_2 \times_{\mathcal H} X_3\right\rangle_{\mathcal H}\neq 0.$$
 Although not mentioned explicitly in \cite{Hwang}, their formulae (17--19) lead to the same result of relative equilibria derived here. 
Furthermore, $ X_1$, $ X_2$ and $ X_3$ are linearly independent in $\mathbb R^3$, implying that $\mathbf{J}\left(X_e\right)=\Gamma_1X_1+\Gamma_2 X_2+\Gamma_3 X_3 \neq 0$ for every equilateral configuration. Thus the $G_\mu$ orbit of an equilateral relative equilibrium is one of the conics described in Section~\ref{gem}. 
 \end{remark}

The next two theorems present the conditions on the vorticities $\Gamma$ for relative equilibria of three point vortices. The complete proofs in \cite{thesis} consist in finding any restrictions on $\Gamma$ for the existence of the angular velocity $\xi$ in $\left(\ref{equireleq}\right)$. 

\begin{thm} 
Every equilateral configuration $X_e$ of point vortices $\left(X_1, X_2, X_3\right)$ in $\mathcal M$ is a relative equilibrium. The angular velocity of $X_e$ is given by \begin{equation}\xi=\frac{1}{2\pi L}\mathbf{J}\left(X_1,X_2,X_3\right),
\label{angeq}
\end{equation}
where $L=\langle {X_i}, {X_j}\rangle_{\mathcal H}^2-1$ for $i\neq j$. 
\end{thm}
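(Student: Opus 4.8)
The plan is to verify directly that the candidate angular velocity $\xi=\frac{1}{2\pi L}\mathbf{J}\left(X_1,X_2,X_3\right)$ satisfies the relative equilibrium condition (\ref{equireleq}) for a suitable choice of Lagrange multipliers. The starting observation is that \emph{equilateral} means the three pairwise hyperbolic distances $d(X_i,X_j)$ coincide; by the relation (\ref{eq:cosh relation}) this is equivalent to the three inner products $\langle X_i,X_j\rangle_{\mathcal H}$ being equal, and hence to all three quantities $L_{pr}=\langle X_p,X_r\rangle_{\mathcal H}^2-1$ taking one common value $L$. This is precisely what makes the notation $L$ in the statement well-defined. Recall from the derivation of (\ref{equireleq}) that $X_e$ is a relative equilibrium exactly when there exist an angular velocity $\xi$ and Lagrange multipliers $\lambda_r$ making (\ref{equireleq}) hold for every $r$ and every $i$, so it suffices to exhibit such $\xi$ and $\lambda_r$.

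With $L_{pr}=L$ constant, the interaction sum in (\ref{equireleq}) collapses to $\frac{1}{2\pi L}\sum_{p\neq r}\Gamma_p X_p^i$. Using the definition of the momentum map (\ref{momentum}), I rewrite $\sum_{p\neq r}\Gamma_p X_p=\mathbf{J}-\Gamma_r X_r$, so that (\ref{equireleq}) becomes
$$\xi_i=\frac{1}{2\pi L}\mathbf{J}^i+\left(\frac{\lambda_r}{\Gamma_r}-\frac{\Gamma_r}{2\pi L}\right)X_r^i.$$
Substituting the proposed $\xi=\frac{1}{2\pi L}\mathbf{J}$ then forces the residual term to vanish, which is achieved by taking $\lambda_r=\Gamma_r^2/(2\pi L)$. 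Since that residual is simply a scalar multiple of $X_r^i$, this single value of $\lambda_r$ works simultaneously for all three coordinate directions $i$, so the critical point equations are satisfied and $X_e$ is a relative equilibrium with the claimed angular velocity.

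There is no serious obstacle here: the entire content is the observation that equilaterality forces the three denominators $L_{pr}$ to be equal, after which the momentum map emerges automatically from the numerator sum and the Lagrange multipliers absorb the leftover terms proportional to $X_r$. The only point requiring a moment's care is confirming that one common scalar $\lambda_r$ can clear the residual across all of $i=1,2,3$ for each fixed $r$ --- which it can, precisely because that residual is proportional to $X_r^i$. I would also note that the argument is independent of the normalisation of the configuration in (\ref{threl}), so no loss of generality is incurred by the specific placement of $X_2$ at $(0,0,1)$, and that the linear independence of $X_1,X_2,X_3$ for an equilateral configuration guarantees $\mathbf{J}\neq0$, so the formula for $\xi$ is consistent with the earlier remark that such relative equilibria have nonzero momentum.
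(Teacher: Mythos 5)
Your proposal is correct and follows essentially the same route as the paper, which (deferring the computational details to the thesis) establishes the result by solving the relative equilibrium conditions (\ref{equireleq}) for an equilateral configuration and finding no restriction on the vorticities. Your explicit calculation --- all $L_{pr}$ collapsing to a common $L>0$, extracting $\mathbf{J}$ from the interaction sum, and absorbing the residual term proportional to $X_r^i$ with the choice $\lambda_r=\Gamma_r^2/(2\pi L)$, the same $\xi$ emerging for every $r$ --- is precisely the verification the paper alludes to.
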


Note that $L=\sinh^2(d(X_i,X_j))$ as follows from (\ref{eq:cosh relation}).   Since for equilateral configurations $\mathbf{J} \neq0$, it follows from  (\ref{angeq}) that these are never equilibria (it is also easy to see this geometrically from first principles).

\begin{thm} 
Let $X_e=\left(X_1, X_2, X_3\right)\in \mathcal M$ be a configuration of point vortices on a geodesic of the hyperboloid, with vorticity vector $\Gamma=\left(\Gamma_1, \Gamma_2, \Gamma_3\right)$. Suppose $ X_2$ lies between $ X_1$ and $ X_3$ on the geodesic, and let $L_{ij}=\langle  X_i,  X_j\rangle_{\mathcal H}^2-1$. Then $X_e$ is a relative equilibrium point if and only if
\begin{equation} 
\sqrt {L_{{23}
}} \left( L_{{13}}-L_{{12}} \right) \Gamma _{{1}}+\sqrt{L_{13}} \left( L_{{23}}-L_{{12}} \right) \Gamma _{{2}}+\sqrt {L_{{12}}}
 \left( L_{{23}}-L_{{13}} \right) \Gamma _{{3}}=0
 \label{georel}
\end{equation}
Moreover, the momentum value $\mu$ of a geodesic relative equilibrium is either zero or elliptic.
\label{georelt}
\end{thm}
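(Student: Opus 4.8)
The plan is to use the symmetry group to normalise the geodesic configuration, reduce the relative equilibrium equations (\ref{equireleq}) to a small linear system, and then treat the two assertions separately: the solvability condition gives (\ref{georel}), and a signature argument gives the ``Moreover'' clause. First I would use transitivity of the $\SL(2,\mathbb R)$-action: any geodesic is $P\cap\mathcal H_2$ for a plane $P$ through the origin, and the configuration may be rotated so that $P=\{y=0\}$. Hence I may take $X_r=(\sinh s_r,0,\cosh s_r)$ with $s_2=0$, and by the betweenness hypothesis $s_1<0<s_3$. Then $\langle X_p,X_r\rangle_{\mathcal H}=-\cosh(s_r-s_p)$, so $L_{pr}=\sinh^2(s_r-s_p)$ (matching the note after the statement), while the hyperbolic cross product of two such coplanar vectors has only a $y$-component, equal to $\sinh(s_r-s_p)$.

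For the ``iff'', I would substitute this into (\ref{equireleq}). Since every $X_r^2=0$, the $i=2$ component forces $\xi_2=0$, while the $i=1,3$ components say $\xi-(\lambda_r/\Gamma_r)X_r=V_r$ in the $(x,z)$-plane, with $V_r=\frac{1}{2\pi}\sum_{p\neq r}(\Gamma_p/L_{pr})X_p$. Pairing each equation with $X_r$ through the $y$-component of $\times_{\mathcal H}$ annihilates $X_r\times_{\mathcal H}X_r$ and, since the $\lambda_r$-term is proportional to $X_r$, removes the Lagrange multiplier; what remains are the three scalar equations $\xi_3\sinh s_r-\xi_1\cosh s_r=\frac{1}{2\pi}\sum_{p\neq r}\Gamma_p/\sinh(s_r-s_p)$, linear in $(\xi_1,\xi_3)$. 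Conversely, once these hold the component of $\xi-V_r$ along $X_r$ determines $\lambda_r$, so the required $\xi$ exists iff these three equations in two unknowns are consistent, i.e. iff an augmented $3\times3$ determinant vanishes. Expanding that determinant along the right-hand-side column and repeatedly using $\sinh^2X-\sinh^2Y=\sinh(X+Y)\sinh(X-Y)$, with the signs of $\sinh(s_r-s_p)$ read off from $s_1<0<s_3$, should collapse the expression, after clearing $\sqrt{L_{12}L_{13}L_{23}}$, into exactly (\ref{georel}). This expansion is the most calculation-heavy step, but it is routine once the signs are tracked.

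For the momentum clause I would argue as follows. As all $X_i\in\{y=0\}$, so is $\check\mu=\sum\Gamma_iX_i$, whence $\mu_2=0$ and $\det\mu=\mu_3^2-\mu_1^2=(\mu_3+\mu_1)(\mu_3-\mu_1)=uv$, where $u=\sum_i\Gamma_ie^{s_i}$ and $v=\sum_i\Gamma_ie^{-s_i}$. By Theorem \ref{coadorb}, ``$\mu$ zero or elliptic'' is equivalent to $\det\mu=uv\ge0$ with equality only at $\mu=0$, so it suffices to prove $uv\ge0$ on the solution set $\mathcal P$ of (\ref{georel}). The quadratic form $uv$ in $\Gamma$ is a product of two independent linear forms, so its radical is the line $\{u=v=0\}=\{\mu=0\}=\mathbb R\kappa$, where $\kappa$ is the linear dependency $\sum\kappa_iX_i=0$; using betweenness ($d(X_1,X_3)=d(X_1,X_2)+d(X_2,X_3)$) one computes $\kappa=(\sqrt{L_{23}},-\sqrt{L_{13}},\sqrt{L_{12}})$. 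Substituting this $\kappa$ into the left side of (\ref{georel}) gives $L_{23}(L_{13}-L_{12})-L_{13}(L_{23}-L_{12})+L_{12}(L_{23}-L_{13})$, which vanishes identically, so $\kappa$ itself solves (\ref{georel}); that is, the radical of $uv$ lies inside $\mathcal P$. Consequently $uv$ restricted to the plane $\mathcal P$ has rank at most one and hence a constant sign off $\mathbb R\kappa$, and it is enough to evaluate it at one convenient point. Since the coefficients of (\ref{georel}) admit a solution with all $\Gamma_i>0$, and for such vorticities both $u$ and $v$ are sums of positive terms, $uv>0$ there; therefore $uv\ge0$ throughout $\mathcal P$, vanishing only on $\mathbb R\kappa=\{\mu=0\}$, which is the assertion.

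I expect the determinant expansion in the first part to be the longest computation. The conceptual crux of the second part is recognising that the zero-momentum vorticity vector automatically satisfies (\ref{georel}) (the radical-containment $\mathbb R\kappa\subset\mathcal P$); once this is spotted, the sign of $\det\mu$ is pinned down by a single positive-vorticity test configuration rather than by a direct estimate, which would genuinely fail for vortices of opposite sign that are far apart.
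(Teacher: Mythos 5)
Your proof is correct, and it is worth comparing with the paper's. For the equivalence with (\ref{georel}) you follow essentially the same route as the paper (normalise the geodesic to the plane $y=0$ and compute (\ref{equireleq})), though your organisation of the computation --- eliminating the Lagrange multipliers by pairing with $X_r\times_{\mathcal H}(\cdot)$, reducing to three linear equations in $(\xi_1,\xi_3)$, and reading off (\ref{georel}) as the vanishing of the augmented $3\times3$ determinant --- is a clean way to package what the paper leaves implicit; I checked that the cofactor expansion, with the signs of $\sinh(s_r-s_p)$ fixed by $s_1<0<s_3$, does reproduce (\ref{georel}) exactly after clearing $\sqrt{L_{12}L_{13}L_{23}}$. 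For the ``moreover'' clause your argument is genuinely different from, and in some ways better than, the paper's. The paper splits into the isosceles case (where it exhibits $\det\mu$ as a perfect square) and a case $x_1\neq x_3$, $\Gamma_1\neq\Gamma_3$ (where it displays $\det\mu$ as a manifestly nonnegative expression times $(\Gamma_1x_1-\Gamma_3x_3)^2$); this is a heavy computation and the case division is not obviously exhaustive. Your observation that $\det\mu=uv$ is a signature-$(1,1)$ quadratic form in $\Gamma$ whose radical $\{u=v=0\}=\{\mu=0\}=\mathbb{R}\kappa$ lies inside the plane $\mathcal P$ defined by (\ref{georel}) --- because $\kappa=(\sqrt{L_{23}},-\sqrt{L_{13}},\sqrt{L_{12}})$ kills the left side of (\ref{georel}) identically --- forces $\det\mu\vert_{\mathcal P}$ to be semidefinite with zero set exactly $\mathbb{R}\kappa$, so a single test point settles the sign; this treats all geodesic configurations uniformly and explains \emph{why} the paper's explicit formulae come out as positive multiples of perfect squares. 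The only step you should make explicit is the existence of a point of $\mathcal P$ in the open positive octant: betweenness gives $L_{13}>L_{12}$ and $L_{13}>L_{23}$, so the coefficients of $\Gamma_1$ and $\Gamma_3$ in (\ref{georel}) have opposite signs, and the plane therefore meets $\{\Gamma_i>0\}$, where $u>0$ and $v>0$. With that sentence added, the argument is complete.
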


\begin{proof}
The proof of expression $\left(\ref{georel}\right)$ is derived by computing the relative equilibrium conditions $\left(\ref{equireleq}\right)$ for 
\begin{eqnarray}
{X_1}&=&\left(x_1,0,\sqrt{1+x_1^2}\right), \nonumber \\
{X_2}&=&\left(0,0,1\right), \nonumber \\
{X_3}&=&\left(-x_3,0,\sqrt{1+x_3^2}\right),
\label{geocon}
\end{eqnarray}
in terms of $L_{ij}$ where $x_1=\sqrt{L_{12}}$, $x_3=\sqrt{L_{23}}$.

For the type of momentum, we first consider isosceles geodesic configurations, that is $x_1=x_3$. Straightforward calculations show that for Equation (\ref{georel}) to be satisfied $\Gamma_1=\Gamma_3$ must hold. Conversely, substituting $L_{13}$ in terms of $L_{12}$ and $L_{23}$ in $\left(\ref{georel}\right)$ with $\Gamma_1=\Gamma_3$ leads to $L_{12}=L_{23}$. Under this vorticity condition, the determinant of the momentum value $\mu=\mathbf{J}\left(X_1, X_2, X_3\right)$ is
\begin{equation}
\det \mu=\left(2\Gamma_1 \sqrt{1+x_1^2}+\Gamma_2\right)^2,
\end{equation}
therefore $\det \mu>0$ for all $\Gamma_2\neq -2\Gamma_1\sqrt{1+x_1^2}$, otherwise $\mu=0$. Suppose now that $x_1\neq x_3$ and $\Gamma_1\neq \Gamma_3$, the determinant of the momentum value is
\begin{eqnarray*}
\det \mu&=&8 \frac{\left(\Gamma _{1}x_{1}-\Gamma _{3}x_{3}\right)^{2}}{k^2} \Biggl(\left(  \left( \frac{1}{4}+x_3^2 \right)x_1^2+\frac{1}{4}x_3^2\right) x_3x_1\sqrt{1+x_3^2}\sqrt {1+x_1^2}+ \Biggr. \\
&&\Biggl. + \left( \frac{1}{8}+\frac{3}{4}x_3^2+x_3^4\right)x_1^4+\left( \frac{3}{4}x_3^4+\frac{3}{8}x_
3^2\right)x_1^2+\frac{1}{8}x_3^4 \Biggr),
\end{eqnarray*}
where $k=\left(x_1-x_3\right)\left(x_1+x_3\right)\left(x_3\sqrt{1+x_1^2}+x_1\sqrt{1+x_3^2}\right)$. Recall that $x_1>0$ and $x_3>0$, hence $\mu$ is elliptic provided $x_1\Gamma _{1} \neq \Gamma _{3}x_3$, otherwise $\mu=0$.
\end{proof}

It is remarkable that for any geodesic configuration of three vortices $X_e$ there can always be found a set of vorticities $\Gamma$ such that $X_e$ is a relative equilibrium. Conversely, for given values of the vorticities there is a 1-parameter family of inequivalent geodesic relative equilibria.

\myparagraph{Equilibria} It is interesting to ask which of the relative equilibria are in fact (fixed) equilibria. This was answered by Hwang and Kim \cite{Hwang}, who showed that a necessary and sufficient condition for an equilibrium is that
 
\begin{equation} \label{eq:HK-equilibrium}
\sum_i\Gamma_i(\Gamma_j+\Gamma_k) X_i=0
\end{equation}
where $i,j,k$ is a cyclic permutation of $1,2,3$, and this is only possible if $\sum_{i<j}\Gamma_i\Gamma_j<0$.

A particular case is the isosceles geodesic equilibrium whose stability we will consider again at the end of the paper.  A calculation using (\ref{georel}) shows that if $L_{12}=L_{23}$ then $\Gamma_3=\Gamma_1$ for a relative equilibrium, and, as shown in the proof of Theorem \ref{georelt} above, its momentum value $\mu$ is elliptic if $a=\langle  X_1,  X_2 \rangle_{\mathcal H} \neq \frac{\Gamma_2}{2\Gamma_1}$, otherwise $\mu=0$. Taking the hyperbolic inner product of the condition (\ref{eq:HK-equilibrium})  with $ X_2$ and combining with the value of $a$ just given leads to 
\begin{equation} 
\Gamma_2=\frac{\Gamma_1a}{1-a}.
\label{relfix}
\end{equation}
Therefore, a configuration $X_e$ with $\Gamma_1=\Gamma_3$ and $\Gamma_2$ given by (\ref{relfix}) is an isosceles equilibrium configuration. Furthermore, the momentum value of $X_e$ is elliptic.

\section{Stability of relative equilibria}
\label{sec:stability}

Before presenting any of our stability results, we begin by recalling the notions of (nonlinear) stability for relative equilibria symmetric Hamiltonian systems. Let $G$ be the group of symmetries, and $\mathbf{J}:\mathcal M\to \mathfrak{g}^*$ be the momentum map, where $\mathcal M$ is the phase space. 

The first notion is $G$-stability of a relative equilibrium: this is the usual definition of Lyapunov stability but using $G$-invariant open sets.  Specifically, a relative equilibrium $x_e$ is $G$-stable if for every $G$-invariant neighbourhood $V$ of $x_e$ there exists a $G$-invariant neighbourhood $U$ of $x_e$ such that any trajectory intersecting $U$ lies entirely within $V$. Since the dynamics on $\mathcal M$ projects to dynamics on the orbit space (or shape space) $\mathcal M/G$, and the relative equilibrium projects to an equilibrium point, this is equivalent to Lyapunov stability of this projected equilibrium. 

As the momentum is conserved, one can also study stability within a level-set of the momentum, $\mathbf{J}^{-1}(\mu)$ (for the appropriate value of $\mu$). The system on this level set is invariant under $G_\mu$ (by definition of $G_\mu$), and so the natural notion of stability is Lyapunov stability relative to $G_\mu$ on this level set; this is called \emph{leafwise stability}.

A finer notion of stability was introduced by Patrick \cite{Patrick1}, and this is \emph{stability relative to a subgroup} $G'$ of $G$, and in particular the subgroup $G_\mu$ (but here the stability is relative to all perturbations of the initial condition, not just those with the same momentum value). The definition is the same as that above, with $G$ replaced by $G'$.  It is straightforward to show that if $x_e$ is $G'$-stable, then it is also $G$-stable.

Before progressing further, we describe the standard criterion for stability, known as formal stability, and based on Dirichlet's criterion for stability of an equilibrium. Given a symmetric Hamiltonian system, a \emph{symplectic normal space} at a point $x$ is any choice of complement $N_1$ to $T_x(G_\mu\cdot x)$ in $(d\mathbf{J}_x)^{-1}(0)$.  It is a symplectic space, and for free actions it can be naturally identified with the tangent space to the symplectic reduced space. If $x$ is a relative equilibrium, it is said to be \emph{formally stable} if the restriction of the Hessian of the augmented Hamiltonian to (any) symplectic normal space is positive or negative definite. (If $N_1=0$ this condition is empty.)  The question that in general needs addressing is whether formal stability implies stability, and if so whether it is $G_\mu$-stability or simply $G$-stability, or leafwise stability.
 
In order to answer this question, we need to introduce the notions of \emph{split} and \emph{regular} points in momentum space, see \cite{{Lerman},{PRW04}}.

\begin{definition} \label{defregspli} 
(1) Let $\mu \in \mathfrak g^*$ and $G_\mu^0$ the identity component of $G_\mu$.  One says that $\mu$ is \emph{split} if there exists a $G_\mu^0$- invariant complement $\mathfrak{n}_\mu$ to $\mathfrak{g}_\mu$ in $\mathfrak{g}$. \\
(2)  A point $\mu \in \mathfrak{g}^*$ is $\emph{regular}$ if $\dim \mathfrak{g}_\nu = \dim \mathfrak{g}_\mu$ for every $\nu$ in a neighbourhood of $\mu$. \\
(3) A point $\mu\in\mathfrak{g}^*$ is \emph{of Hausdorff type} if there is a neighbourhood $U$ of $G\cdot\mu$ in $\mathfrak{g}^*/G$ within which $G\cdot\mu$ can be separated by disjoint open sets from any other point in $U$.
\end{definition} 

Proposition 5 of \cite{PRW04} states that every regular $\mu$ is of Hausdorff type, and that if $\mu$ is split then it is of Hausdorff type if there exists a $G_\mu^0$-invariant inner product on $\mathfrak{g}_\mu$.  

Patrick \cite{Patrick1} proves that formal stability implies $G_\mu$-stability for relative equilibria for compact groups. This was extended by Lerman and Singer \cite{Lerman}, who showed that formal stability implies $G_\mu$-stability for proper group actions and relative equilibria with split momentum value, provided in addition there is a $G_\mu$-invariant inner product on $\mathfrak g^*$. The more general result due to Patrick, Roberts and Wulff is the following.

\begin{thm}[\cite{PRW04}] Let $x_e$ be a relative equilibrium for a Hamiltonian system with a free and proper action of a symmetry group $G$. 
\begin{enumerate}
\item Suppose $x_e$ is formally stable and that $\mu= J \left(x_e\right)$ is of Hausdorff type. Then $x_e$ is $G$-stable. 
\item  Suppose $x_e$ is $G$-stable with $\mu= J \left(x_e\right)$. If there exists a $G_\mu^0$ invariant inner product on $\mathfrak g^*$, then $x_e$ is $G_\mu^0$-stable.
\end{enumerate}
\label{thm:PRW}
\end{thm}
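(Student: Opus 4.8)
The plan is to prove both parts by building a Lyapunov function on the orbit space $\mathcal M/G$, working in a local normal form adapted to the relative equilibrium. Since the action is free and proper, I would first invoke the symplectic slice theorem (the Marle--Guillemin--Sternberg normal form) to model a $G$-invariant neighbourhood of the orbit $G\cdot x_e$ as an associated bundle $G\times_{G_\mu}\left(\mathfrak g_\mu^*\times N_1\right)$, where $N_1$ is the symplectic normal space appearing in the definition of formal stability. In these coordinates the two conserved quantities are transparent: the momentum map $\mathbf J$ projects onto the $\mathfrak g_\mu^*$-factor (up to the coadjoint twist along the $G$-direction), and the augmented Hamiltonian $H_\xi=H-\langle\mathbf J,\xi\rangle$ is $G$-invariant with $x_e$ a critical point whose Hessian is, by hypothesis, definite on $N_1$. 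The difficulty already visible here is that $H_\xi$ controls only the $N_1$-directions and says nothing about the transverse momentum directions $\mathfrak g_\mu^*$, which on the sphere or for compact groups would be handled by Casimir functions that are simply unavailable for a non-compact, simple group such as $\SL(2,\mathbb R)$.

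For part (1), this gap is bridged by the Hausdorff-type hypothesis, which is precisely the condition guaranteeing a continuous $G$-invariant function $C$ defined near $G\cdot\mu$ having $G\cdot\mu$ as a strict local minimum set: $C\ge 0$, vanishing exactly on $G\cdot\mu$, and bounded below by a positive constant on the boundary of a small neighbourhood (that Hausdorff type, in the sense of Definition \ref{defregspli}, yields such a separating invariant function is the content of the topological analysis in \cite{PRW04}). Since $\mathbf J$ is conserved by Noether's theorem and $C$ is $G$-invariant, the pullback $C\circ\mathbf J$ is a conserved function that descends to $\mathcal M/G$ and confines the momentum value to a neighbourhood of the coadjoint orbit, thereby controlling exactly the $\mathfrak g_\mu^*$-directions left open by $H_\xi$. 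I would then form the combined function $\Phi=H_\xi+C\circ\mathbf J$ (normalised to vanish at $x_e$), check that it descends to $\mathcal M/G$, and show it has a strict local minimum at $[x_e]$: the $N_1$-block is handled by formal stability and the momentum-transverse block by $C$, with the cross terms absorbed by rescaling $C$ so as to dominate them. A standard Lyapunov argument using conservation of $\Phi$ then confines trajectories to a neighbourhood of $[x_e]$ in $\mathcal M/G$, which is precisely $G$-stability.

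For part (2), I would upgrade $G$-stability to $G_\mu^0$-stability by using the conserved momentum to control the drift along the group directions transverse to $G_\mu$. Given a perturbed initial condition, $G$-stability lets me write the trajectory as $x(t)=g(t)\cdot y(t)$ with $y(t)$ close to $x_e$; the equivariance $\mathbf J(g\cdot X)=\Ad^*_{g^{-1}}\mathbf J(X)$ then gives $\mathbf J(x(t))=\Ad^*_{g(t)^{-1}}\mathbf J(y(t))$, and since $\mathbf J$ is exactly conserved while both $\mathbf J(x(t))$ and $\mathbf J(y(t))$ remain near $\mu$, the element $g(t)$ must nearly fix $\mu$ under the coadjoint action. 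The $G_\mu^0$-invariant inner product on $\mathfrak g^*$ is what makes ``nearly fixes $\mu$'' quantitative and $G_\mu^0$-equivariant: it provides a $G_\mu^0$-invariant transversal to the coadjoint orbit along which the displacement $\Ad^*_{g(t)^{-1}}\mu-\mu$ is measured, so that smallness of this displacement forces $g(t)$ into a neighbourhood of $G_\mu^0$, hence $x(t)$ into a neighbourhood of $G_\mu^0\cdot x_e$.

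The main obstacle, and the reason both additional hypotheses are needed, is the non-compactness of $\SL(2,\mathbb R)$ and of its coadjoint orbits: one cannot average to manufacture an invariant Casimir or an invariant metric, so the two classical inputs of the energy--momentum method must be supplied by hand. Concretely, the delicate step in part (1) is producing the separating function $C$ continuous, $G$-invariant and with the required growth from the purely topological Hausdorff-type condition, and then controlling the cross terms so that $\Phi$ is genuinely minimised modulo $G$; in part (2) the delicate step is converting the inner-product bound on the coadjoint displacement into a \emph{uniform} confinement of $g(t)$ to $G_\mu^0$, since along a non-compact coadjoint orbit the map $g\mapsto\Ad^*_{g^{-1}}\mu$ need not be proper without such a device.
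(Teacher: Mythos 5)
There is nothing in the paper to compare your proposal against: Theorem~\ref{thm:PRW} is stated with the citation \cite{PRW04} and no proof is given or even sketched --- it is an imported black box from Patrick, Roberts and Wulff, whose original proof occupies a substantial part of a long Archive paper. So the only question is whether your sketch would stand on its own, and as written it would not. In part (1), the step that turns the Hausdorff-type hypothesis into a continuous $G$-invariant function $C\ge 0$ vanishing exactly on $G\cdot\mu$ and bounded below on the boundary of every small neighbourhood is a genuine leap: Hausdorff type (Definition~\ref{defregspli}(3)) only says that $G\cdot\mu$ can be separated by disjoint open sets from each nearby point of $\mathfrak g^*/G$, and separation by open sets does not manufacture such a function (that would need normality plus, crucially, that the sublevel sets $\{C<\epsilon\}$ shrink onto $G\cdot\mu$, which is a properness statement, not a $T_2$ statement). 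Even granting $C$, your Lyapunov function $\Phi=H_\xi+C\circ\mathbf J$ does not confine anything in the momentum directions: $C\circ\mathbf J$ is constant along the perturbed trajectory with some small positive value $c_0$, and without control of the sublevel sets a distant momentum orbit can carry the same value $c_0$. The separation hypothesis has to enter through the fact that the conserved momentum pins down a single point of $\mathfrak g^*/G$, not through an energy--Casimir construction; this is precisely the content you would have to reconstruct from \cite{PRW04}.

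In part (2) the gap is the one you name yourself and then do not close. From $\Ad^*_{g(t)^{-1}}\mathbf J(y(t))=\mathrm{const}\approx\mu$ and $\mathbf J(y(t))\approx\mu$ you want to conclude $g(t)$ lies near $G_\mu^0$, which requires the orbit map $gG_\mu\mapsto\Ad^*_{g^{-1}}\mu$ to be proper onto a neighbourhood of $\mu$ in $\mathcal O_\mu$. A $G_\mu^0$-invariant inner product on $\mathfrak g^*$ does not, by itself, deliver that properness --- it is a pointwise algebraic datum, and your own example of a non-closed coadjoint orbit shows the inference can fail without further input. The standard way the inner product is actually used is to observe that $x\mapsto\Vert\mathbf J(x)-\mu\Vert^2$ is both conserved and $G_\mu^0$-invariant (invariance of the norm under $\Ad^*(G_\mu^0)$ plus $\Ad^*_{g^{-1}}\mu=\mu$), giving a second conserved $G_\mu^0$-invariant function to intersect with the $G$-invariant neighbourhoods supplied by $G$-stability; the remaining work, which is the real content of the theorem, is showing this intersection is small modulo $G_\mu^0$. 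So your proposal correctly identifies where the two hypotheses must act, but at both of those places it asserts the conclusion rather than proving it.
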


In our setting with a free action of $G=\SL(2,\mathbb{R})$, for any $\mu\in \mathfrak{sl}\left(2, \mathbb R\right)^*$ the isotropy subgroups $G_\mu=G_\mu^0$ are given in Theorem\,\ref{coadorb}, and one sees $\mu$ is split if and only if it is not parabolic, and every non-zero $\mu$ is both regular and of Hausdorff type. Furthermore, there is a $G_\mu$ invariant inner product on $\mathfrak g^*$ if and only if $\mu$ is elliptic.

Since in our setting the action is free, we restrict attention to that case with no further mention.  (For non-free actions see \cite{MoOR} and references therein.)

\subsection{Two point vortices}

\begin{thm}
 Every trajectory of the two point vortex system in the hyperbolic plane is an $\SL(2, \mathbb R)$-stable relative equilibrium.
\label{G2sta}
\end{thm}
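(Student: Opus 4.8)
The plan is to invoke part~(1) of Theorem~\ref{thm:PRW}: since the action of $G=\SL(2,\mathbb R)$ on $\mathcal M$ is free and proper, it suffices to check that an arbitrary two-vortex relative equilibrium $X_e$ is formally stable and that its momentum value $\mu=\mathbf J(X_e)$ is of Hausdorff type. The second condition is immediate: as noted in Section~\ref{sec:2 point vortices}, every two-vortex configuration has $\mu\neq0$, and every non-zero $\mu$ is regular, hence of Hausdorff type.

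For formal stability I would show that the symplectic normal space $N_1$ is trivial, so that the definiteness condition on the Hessian of the augmented Hamiltonian is vacuous. Counting dimensions, $\dim\mathcal M=4$, and by the implicit function theorem argument of Section~\ref{sec:2 point vortices} the level set $\mathbf J^{-1}(\mu)$ is one-dimensional, so $(d\mathbf J_{X_e})^{-1}(0)=\ker d\mathbf J_{X_e}$ is one-dimensional. On the other hand, since $\mu\neq0$, Theorem~\ref{coadorb} gives $\dim G_\mu=1$, and as the action is free the orbit $G_\mu\cdot X_e$ is one-dimensional. Because $\mathbf J$ is coadjoint equivariant and $G_\mu$ fixes $\mu$, we have $G_\mu\cdot X_e\subseteq\mathbf J^{-1}(\mu)$, whence $T_{X_e}(G_\mu\cdot X_e)\subseteq\ker d\mathbf J_{X_e}$. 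As both subspaces are one-dimensional they coincide, and therefore $N_1=0$. Formal stability then holds trivially.

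With $N_1=0$ and $\mu$ of Hausdorff type, Theorem~\ref{thm:PRW}(1) yields $G$-stability of $X_e$, completing the argument. The same conclusion can be reached directly: $G$-stability is equivalent to Lyapunov stability of the projected equilibrium in the orbit space $\mathcal M/G$, which here is one-dimensional; since \emph{every} two-vortex configuration is a relative equilibrium, the reduced dynamics on $\mathcal M/G$ is identically zero, so every projected point is trivially stable. The only point requiring care in either route is the dimension bookkeeping that forces $T_{X_e}(G_\mu\cdot X_e)=\ker d\mathbf J_{X_e}$; once this equality of one-dimensional spaces is established the symplectic normal space vanishes and there is nothing left to estimate.
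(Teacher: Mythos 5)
Your proposal is correct and follows essentially the same two routes as the paper's own proof, which likewise notes that the result follows from Theorem~\ref{thm:PRW}(1) (with $\mu\neq0$ regular and $N_1=0$ making formal stability vacuous) but is really trivial because every point of the orbit space is an equilibrium. The only addition is your explicit dimension count establishing $T_{X_e}(G_\mu\cdot X_e)=\ker d\mathbf J_{X_e}$, which the paper asserts without detail and which you carry out correctly.
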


\begin{proof}While this does follow from Theorem \ref{thm:PRW}(1) above (since $\mu\neq0$ and hence regular, and formal stability is trivial as $N_1=0$), this is in fact trivial and needs no general theory: since every point in the phase space is a relative equilibrium, it follows that every point in the orbit space is an equilibrium, and hence in the orbit space every point is stable. 
\end{proof}

To deduce $G_\mu$-stability from Theorem\,\ref{thm:PRW} above, the momentum value must be elliptic.

\begin{thm}
\label{thm:stability of 2 point vortices}
Let $X_e=\left(X_1, X_2\right)\in \mathcal M$ be two point vortices with vorticities $\Gamma_1, \Gamma_2$ and hyperbolic distance $c=d\left({X_1}, {X_2}\right)$. Suppose the momentum value is given by $\mu=\mathbf{J}\left(X_1, X_2\right)\in \mathfrak{sl}\left(2, \mathbb R\right)^*$. Then $X_e$ is $\SL(2, \mathbb R)_\mu$- stable if the vortex strengths are of the same sign or, if they are of opposite sign, the vortices satisfy $c< |\ln|\Gamma_1|-\ln|\Gamma_2|\,|$.
Otherwise it is only leafwise stable.  
\label{corGmu2}
\end{thm}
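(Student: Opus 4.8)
The plan is to reduce the entire statement to the determination of the \emph{type} of the momentum value $\mu=\mathbf J(X_1,X_2)$, and then to feed this into the machinery of Theorem~\ref{thm:PRW}. By Theorem~\ref{G2sta} we already know $X_e$ is $\SL(2,\mathbb R)$-stable, so the only question is whether this can be promoted to $\SL(2,\mathbb R)_\mu$-stability. According to the discussion following Theorem~\ref{thm:PRW}, all the isotropy groups of Theorem~\ref{coadorb} are connected, so $G_\mu=G_\mu^0$, and a $G_\mu^0$-invariant inner product on $\mathfrak g^*$ exists precisely when $\mu$ is elliptic. Thus the whole theorem will follow once I show that $\mu$ is elliptic exactly under the stated hypotheses on $(\Gamma_1,\Gamma_2,c)$.

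First I would carry out the momentum computation. Recall from Section~\ref{gem} that $\det\mu=-\|\check\mu\|_{\mathcal H}^2$, so $\mu$ is elliptic, i.e.\ $\det\mu>0$, iff $\|\check\mu\|_{\mathcal H}^2<0$. Writing $\check\mu=\Gamma_1\check X_1+\Gamma_2\check X_2$ and expanding, using $\langle \check X_i,\check X_i\rangle_{\mathcal H}=-1$ (both points lie on $\mathcal H_2$) together with $\langle \check X_1,\check X_2\rangle_{\mathcal H}=-\cosh c$ from (\ref{eq:cosh relation}), gives
\[
\|\check\mu\|_{\mathcal H}^2=-\Gamma_1^2-\Gamma_2^2-2\Gamma_1\Gamma_2\cosh c,
\]
so the elliptic condition becomes $\Gamma_1^2+\Gamma_2^2+2\Gamma_1\Gamma_2\cosh c>0$. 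When $\Gamma_1\Gamma_2>0$ this holds automatically. When $\Gamma_1\Gamma_2<0$ it rearranges to $\cosh c<\tfrac12\bigl(|\Gamma_1|/|\Gamma_2|+|\Gamma_2|/|\Gamma_1|\bigr)=\cosh\bigl(\ln|\Gamma_1|-\ln|\Gamma_2|\bigr)$, and since $\cosh$ is even and strictly increasing on $[0,\infty)$ and $c\ge0$, this is exactly $c<\bigl|\ln|\Gamma_1|-\ln|\Gamma_2|\bigr|$. This reproduces precisely the two cases in the statement.

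With the type pinned down, the elliptic case is immediate: $X_e$ is $\SL(2,\mathbb R)$-stable by Theorem~\ref{G2sta}, $\mu$ is elliptic so the required $G_\mu^0$-invariant inner product on $\mathfrak g^*$ exists, and Theorem~\ref{thm:PRW}(2) then yields $\SL(2,\mathbb R)_\mu$-stability. For the non-elliptic (parabolic or hyperbolic) values, leafwise stability is automatic by the argument of Section~\ref{sec:2 point vortices}: there $\dim\mathbf J^{-1}(\mu)=1$ and $G_\mu\cdot X_e$ is (locally) all of $\mathbf J^{-1}(\mu)$, so the reduced dynamics on $\mathbf J^{-1}(\mu)/G_\mu$ is a single point and is trivially stable.

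The main obstacle is the word \emph{only}: to justify that in the non-elliptic case $X_e$ genuinely fails to be $\SL(2,\mathbb R)_\mu$-stable, rather than merely that the sufficient criterion of Theorem~\ref{thm:PRW}(2) is unavailable. Here I would argue from non-compactness and momentum drift. For parabolic or hyperbolic $\mu$ the group $G_\mu\cong\mathbb R$ is non-compact, and $\mu$ is the only direction it fixes; a perturbed initial condition $X'$ has momentum $\mu'\neq\mu$ whose coadjoint $G_\mu$-orbit is unbounded, and the dynamics carry $X'$ along the non-compact subgroup $G_{\mu'}$, which is transverse to $G_\mu$ to first order in $\mu'-\mu$. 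Quantifying this transverse drift — showing that the $G_{\mu'}$-trajectory escapes every fixed $G_\mu$-invariant tube about $G_\mu\cdot X_e$ as time grows, no matter how small the initial perturbation — is the technical heart of the converse and the step requiring the most care; it is precisely the drift phenomenon that the absence of a $G_\mu$-invariant inner product signals in the Patrick--Roberts--Wulff framework.
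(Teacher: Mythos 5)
Your proof takes essentially the same route as the paper's: the published proof is the single observation that $\mu$ is elliptic precisely when $\Gamma_1/\Gamma_2>-e^{-c}$ or $\Gamma_1/\Gamma_2<-e^{c}$, which is equivalent to your condition $\Gamma_1^2+\Gamma_2^2+2\Gamma_1\Gamma_2\cosh c>0$, with everything else delegated to Theorem~\ref{thm:PRW}(2) and the preceding discussion exactly as you do. The ``only leafwise stable'' direction that you honestly flag as the remaining technical obstacle is likewise not proved in the paper, so your write-up is, if anything, more complete and more candid than the original.
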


\begin{proof}
The  momentum value $\mu$ of $X_e$ is elliptic for $\frac{\Gamma_1}{\Gamma_2}>-e^{-c}$ or $\frac{\Gamma_1}{\Gamma_2}<-e^{c}$.
\end{proof}

 \subsection{Three point vortices} \label{stathree}

In this setting, the symplectic normal space $N_1$ is of dimension 2 if $\mu\neq0$ and is zero if $\mu=0$ (because $\dim\mathfrak{g}_\mu=1$ or 3 respectively). 
Pekarsky and Marsden \cite{Pekarsky} find a symplectic normal space for three point vortices on the sphere which we adapt below for the hyperbolic case.  We treat the case of $\mu=0$ at the end of this section. 

\begin{prop} 
\label{ssNpoints}
Let $X_e=(X_1, \dots , X_3) \in \mathcal M$ be a relative equilibrium of a set of point vortices with vorticities $\Gamma=\left(\Gamma_1, \dots , \Gamma_3\right)$ with $\mu\neq0$. Suppose that $D_1$ and $D_2$ are two independent vectors in $\mathbb{R}^3$, such that the plane they span does not contain any of the vortices. Then $N_1=\left<\eta,\zeta \right>$ is a symplectic normal space at $X_e$, where
\begin{equation*} 
\left.\begin{aligned}[c]
\eta&=&\left(a_1D_1\times_{\mathcal{H}} {X_1}, \dots , a_3D_1\times_{\mathcal{H}} {X_3}\right)&,& \\
\zeta&=&\left(b_1D_2\times_{\mathcal{H}} {X_1}, \dots , b_3 D_2\times_{\mathcal H} {X_3}\right),&&
\end{aligned} \right\}
 \qquad \in T_{X_e}\mathcal M
 \end{equation*}
with $a_i,\,b_i$ defined by
\begin{eqnarray*}
\sum_i \Gamma_i a_i D_1 \times_{\mathcal{H}} {X_i}&=&0,\\
\sum_i \Gamma_i b_i D_2 \times_{\mathcal{H}} {X_i}&=&0.
\end{eqnarray*}
\label{sst}
\end{prop}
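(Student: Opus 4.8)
The plan is to verify directly the two defining properties of a symplectic normal space: that $\eta,\zeta\in(d\mathbf J_{X_e})^{-1}(0)$, and that together they form a complement there to the orbit tangent $T_{X_e}(G_\mu\cdot X_e)$. First I record dimensions. As the action is free and $\mu\neq0$, $\mathbf J$ is a submersion at $X_e$, so $\dim(d\mathbf J_{X_e})^{-1}(0)=\dim\mathcal M-\dim\mathfrak{sl}(2,\mathbb R)^*=6-3=3$; and $\dim\mathfrak{sl}(2,\mathbb R)_\mu=1$ by Theorem~\ref{coadorb}, so $T_{X_e}(G_\mu\cdot X_e)$ is $1$-dimensional, spanned by $\tau:=(\mu\times_{\mathcal H}X_1,\dots,\mu\times_{\mathcal H}X_3)$, the infinitesimal generator of $\mathfrak{sl}(2,\mathbb R)_\mu=\mathbb R\mu$ acting at $X_e$. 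Hence a symplectic normal space is $2$-dimensional, and it suffices to prove that $\{\eta,\zeta,\tau\}$ is a basis of $(d\mathbf J_{X_e})^{-1}(0)$.

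The membership of $\eta,\zeta$ is immediate. Using the identity $\langle A\times_{\mathcal H}B,C\rangle_{\mathcal H}=\det[A,B,C]$ (determinant of the matrix with columns $A,B,C$), each component satisfies $\langle D_j\times_{\mathcal H}X_i,X_i\rangle_{\mathcal H}=\det[D_j,X_i,X_i]=0$, so it is tangent to $\mathcal H_2$ at $X_i$, and it is nonzero because $X_i\notin\mathrm{span}(D_j)$, which follows from the hypothesis that $\mathrm{span}(D_1,D_2)$ contains no vortex; thus $\eta,\zeta\in T_{X_e}\mathcal M$. Since $\mathbf J$ is linear, $d\mathbf J_{X_e}(v)=\sum_i\Gamma_iv_i$, so $d\mathbf J_{X_e}(\eta)=\sum_i\Gamma_ia_i\,D_1\times_{\mathcal H}X_i=0$ and similarly for $\zeta$, by the defining relations for $a_i,b_i$. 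These relations admit nonzero solutions because the three vectors $D_1\times_{\mathcal H}X_i$ all lie in the $2$-dimensional hyperbolic-orthogonal complement of $D_1$ and so are dependent; likewise for $D_2$. Finally $\tau\in(d\mathbf J_{X_e})^{-1}(0)$ since $d\mathbf J_{X_e}(\tau)=\mu\times_{\mathcal H}\sum_i\Gamma_iX_i=\mu\times_{\mathcal H}\mu=0$.

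For the complement property I would invoke the standard reduction fact that $T_{X_e}(G_\mu\cdot X_e)$ is precisely the radical of $\omega_{\mathcal M}$ restricted to $(d\mathbf J_{X_e})^{-1}(0)$. Pairing a hypothetical relation $\alpha\eta+\beta\zeta+\gamma\tau=0$ with $\eta$ and with $\zeta$ under $\omega_{\mathcal M}$ then kills the $\tau$-terms, leaving $\beta\,\omega_{\mathcal M}(\zeta,\eta)=0$ and $\alpha\,\omega_{\mathcal M}(\eta,\zeta)=0$; if $\omega_{\mathcal M}(\eta,\zeta)\neq0$ this forces $\alpha=\beta=0$ and then $\gamma=0$ (as $\tau\neq0$), giving the independence. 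Everything thus reduces to evaluating $\omega_{\mathcal M}(\eta,\zeta)$. From the KKS form (\ref{kkshyp})--(\ref{sympn}), the identity $u\times_{\mathcal H}v=-\det[u,v,X_i]\,X_i$ for $u,v\in T_{X_i}\mathcal H_2$ gives $\omega_{\mathcal H_2}(X_i)(u,v)=-\tfrac12\det[u,v,X_i]$, and a short computation with the hyperbolic cross product yields $\det[D_1\times_{\mathcal H}X_i,\,D_2\times_{\mathcal H}X_i,\,X_i]=\det[D_1,D_2,X_i]$. Combining these, $\omega_{\mathcal M}(\eta,\zeta)=-\tfrac12\sum_i\Gamma_i a_ib_i\det[D_1,D_2,X_i]$.

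The main obstacle is to show this last sum is nonzero. The hypothesis that $\mathrm{span}(D_1,D_2)$ contains no vortex ensures each factor $\det[D_1,D_2,X_i]\neq0$, but it does not by itself control the $\Gamma_i a_ib_i$-weighted sum. Since $\omega_{\mathcal M}(\eta,\zeta)$ is a polynomial in the entries of $D_1,D_2$ that is not identically zero (it is nonzero for explicit choices), it is nonzero for generic $D_1,D_2$; it can however degenerate for special planes — for example when $\mathrm{span}(D_1,D_2)\ni\mu$, where the defining relation may force $\zeta$ to be proportional to $\tau$. I would therefore run the argument for generic $D_1,D_2$ (equivalently, adjoining the mild nondegeneracy that the above sum not vanish, which holds off a lower-dimensional bad locus), and it is exactly this nonvanishing step — not the kernel membership or the dimension count — that is the crux.
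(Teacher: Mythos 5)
The paper itself gives no proof of this proposition --- it is stated as an adaptation of the Pekarsky--Marsden construction on the sphere --- so there is no argument to compare yours against line by line; judged on its own terms, your set-up is the right one. The dimension count, the verification that $\eta,\zeta$ are tangent to $\mathcal M$ and lie in $\ker d\mathbf{J}_{X_e}$, the identification of $T_{X_e}(G_\mu\cdot X_e)$ with the span of $\tau=(\check\mu\times_{\mathcal H}X_1,\dots,\check\mu\times_{\mathcal H}X_3)$ as the radical of $\omega_{\mathcal M}$ restricted to the kernel, and the reduction of the whole statement to the single scalar condition $\omega_{\mathcal M}(\eta,\zeta)\neq 0$ are all correct. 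The gap you flag at the end is, however, a genuine one, and you are right to suspect it cannot be closed under the hypothesis as literally stated: take $D_2=\check\mu=\sum_i\Gamma_i\check X_i$. Then $b_i\equiv 1$ solves $\sum_i\Gamma_i b_i\,D_2\times_{\mathcal H}X_i=\check\mu\times_{\mathcal H}\check\mu=0$, and (generically this is the unique solution up to scale) $\zeta$ becomes exactly $\tau$, so $\langle\eta,\zeta\rangle$ meets the orbit tangent and is not a complement --- even though for generic $D_1$ the plane $\mathrm{span}(D_1,\check\mu)$ contains none of the vortices. So ``the plane avoids the vortices'' is not sufficient; some further nondegeneracy (e.g.\ $\check\mu\notin\mathrm{span}(D_1,D_2)$, or directly $\sum_i\Gamma_i a_ib_i\det[D_1,D_2,X_i]\neq0$) must be assumed or verified.

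Your retreat to ``generic $D_1,D_2$'' therefore proves a correct but weaker statement than the one asserted. To make the proposal serviceable for the way the proposition is actually used, you should do one of two things: either restate the proposition with the explicit nondegeneracy condition $\omega_{\mathcal M}(\eta,\zeta)\neq0$ (equivalently, that neither $\eta$ nor $\zeta$ is proportional to $\tau$) added to the hypotheses, or carry out the computation of $\omega_{\mathcal M}(\eta,\zeta)$ for the concrete choices made later in the paper ($D_1=X_1+X_2$, $D_2=X_2+X_3$ for the equilateral triangle; $D_1=X_1+X_2$, $D_2=(0,1,0)$ for the isosceles geodesic configuration) and check it is nonzero there. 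Either route closes the gap; as written, the crux step is identified but not resolved.
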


\myparagraph{Equilateral triangles} The $G$ and $G_\mu$-stability of the equilateral triangle depends on the vorticities only and not on the size of the triangle, and the dependence is the same as for point vortices on the sphere and on the plane.
 
\begin{thm} 
An equilateral configuration $X_e=\left(X_1, X_2, X_3\right)\in \mathcal M$ with vorticities $\Gamma_1, \Gamma_2, \Gamma_3$ and momentum value $\mu=J\left(X_e\right)$ is $\SL(2, \mathbb R)_\mu$-stable if
\begin{equation}
\sum\limits_{i \neq j}\Gamma_i\Gamma_j>0.
\label{eqstable}
\end{equation}
However, if
\begin{equation}
\sum\limits_{i \neq j}\Gamma_i\Gamma_j <0,
\label{equnstable}
\end{equation}
then $X_e$ is $\SL(2,\mathbb R)$-unstable.
\end{thm}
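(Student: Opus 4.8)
The plan is to reduce both halves of the statement to the sign of a single quantity: the determinant of the Hessian of the augmented Hamiltonian $H_\xi$ restricted to the two-dimensional symplectic normal space $N_1$ supplied by Proposition \ref{ssNpoints}. First I would fix the angular velocity $\xi=\frac{1}{2\pi L}\mathbf J(X_e)$ given by (\ref{angeq}) and evaluate $\mathrm{Hess}\,H_\xi$ at $X_e$ on the basis $\{\eta,\zeta\}$ of $N_1$. Choosing the auxiliary vectors $D_1,D_2$ adapted to the configuration, so that the coefficients $a_i,b_i$ and the resulting $\eta,\zeta$ take a symmetric form, and using the equilateral relations $\langle X_i,X_j\rangle_{\mathcal H}=-\cosh d$ for $i\neq j$ together with $\langle X_i,X_i\rangle_{\mathcal H}=-1$, the restriction reduces to an explicit $2\times2$ symmetric matrix $\mathcal Q$ in the $\Gamma_i$ and the common side length. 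I expect this bookkeeping with the hyperbolic cross and inner products to be the main obstacle: it mirrors the spherical computation of Pekarsky and Marsden, but the indefinite signature changes several signs and must be tracked carefully. The payoff I anticipate is that $\det\mathcal Q$ factors as a strictly positive quantity times $\sum_{i\neq j}\Gamma_i\Gamma_j$, so that (for a $2\times2$ symmetric matrix) $\mathcal Q$ is definite exactly when (\ref{eqstable}) holds and indefinite exactly when (\ref{equnstable}) holds.

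For the stable direction, assume (\ref{eqstable}). Then $\det\mathcal Q>0$, so $\mathcal Q$ is definite and $X_e$ is formally stable. I would next record that $\mu=\mathbf J(X_e)$ is elliptic: from $\langle X_i,X_j\rangle_{\mathcal H}=-\cosh d$ one computes $\|\check\mu\|_{\mathcal H}^2=-\sum_i\Gamma_i^2-\cosh d\sum_{i\neq j}\Gamma_i\Gamma_j$, which is negative under (\ref{eqstable}); since $\det\mu=-\|\check\mu\|_{\mathcal H}^2>0$, Theorem \ref{coadorb} gives that $\mu$ is elliptic. As $\mu\neq0$ it is regular and of Hausdorff type, and being elliptic it carries a $G_\mu$-invariant inner product on $\mathfrak g^*$ (as noted after Theorem \ref{thm:PRW}). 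Theorem \ref{thm:PRW}(1) then yields $\SL(2,\mathbb R)$-stability, and part (2) upgrades this to $\SL(2,\mathbb R)_\mu$-stability, proving the first assertion.

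For the instability direction, assume (\ref{equnstable}), so $\det\mathcal Q<0$ and $\mathcal Q$ is indefinite. On the symplectic space $N_1$ the linearized reduced flow is $\dot v=-\Omega^{-1}\mathcal Q\,v$, where $\Omega$ is the (invertible, antisymmetric) matrix of the restricted KKS form; its characteristic polynomial is $\lambda^2+\det\mathcal Q/\det\Omega$, with vanishing trace. Since $\det\Omega>0$ and $\det\mathcal Q<0$, this has a real positive root, so the reduced equilibrium is spectrally and hence nonlinearly unstable; that is, $X_e$ is not leafwise stable. Finally, the natural injection $\mathbf J^{-1}(\mu)/G_\mu\hookrightarrow\mathcal M/G$ is, for our free proper action and regular $\mu$, an embedding onto an invariant leaf, so any $\SL(2,\mathbb R)$-stability would restrict to leafwise stability along it. Leafwise instability therefore precludes $\SL(2,\mathbb R)$-stability, establishing the second assertion and completing the proof.
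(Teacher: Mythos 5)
Your proposal follows essentially the same route as the paper: both restrict the Hessian of the augmented Hamiltonian to the two-dimensional symplectic normal space of Proposition \ref{ssNpoints}, find that it is definite exactly when $\sum_{i\neq j}\Gamma_i\Gamma_j>0$, observe via the determinant formula that this condition forces $\mu$ to be elliptic (and that parabolic or hyperbolic $\mu$ can only occur in the unstable regime), and then invoke parts (1) and (2) of Theorem \ref{thm:PRW} to pass from formal stability to $\SL(2,\mathbb R)$- and then $\SL(2,\mathbb R)_\mu$-stability. Your explicit spectral argument in the indefinite case (a real positive eigenvalue of the trace-free linearization on $N_1$, hence leafwise and therefore $\SL(2,\mathbb R)$-instability) only makes precise what the paper leaves implicit, and, like the paper, you defer the actual evaluation of the restricted Hessian to a computation parallel to the spherical case.
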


\begin{proof}
From Proposition \ref{sst} 
\begin{equation*} 
\begin{aligned}[c]
\eta:=\left(\begin{array}{c}
\frac{1}{\Gamma_1}\left(D_1 \times_{\mathcal{H}}{X_1}\right)\\ \frac{1}{\Gamma_2}\left(D_1 \times_{\mathcal{H}}{X_2}\right) \\ \left(0,0,0\right)\end{array}\right)
\end{aligned}
\qquad \text{and} \qquad
\begin{aligned}[c]
\zeta:=\left(\begin{array}{c} \left(0,0,0\right)\\ \frac{1}{\Gamma_2}\left(D_2\times_{\mathcal{H}}{X_2}\right)\\ \frac{1}{\Gamma_3} \left(D_2\times_{\mathcal{H}}{X_2}\right)\end{array}\right),
\end{aligned}
\end{equation*}
 with 
$D_1={X_1}+{X_2}$ and $D_2={X_2}+{X_3}$, generate a symplectic normal space $N_1$ at $X_e$. The calculations are very similar to those on the sphere, and one finds that the relative equilibrium is formally stable if and only if $\sum\Gamma_i\Gamma_j>0$.  We know that any equilateral configuration has non-zero and, therefore, regular momentum value $\mu$. Thus the $\SL(2, \mathbb R)$-stability condition follows from Theorem \ref{thm:PRW}(1), provided (\ref{eqstable}) holds.

Moreover, $\mu$ is either elliptic, parabolic or hyperbolic, with determinant 
\begin{equation}
\det \mu=2k\sum\limits_{i \neq j}\Gamma_i\Gamma_j +\sum\limits_{i}\Gamma_i^2,
\label{deteq}
\end{equation}
where $k=-\left\langle X_i,\,X_j\right\rangle_{\mathcal H}>0$ represents the size of the triangle---see  Eq.\,(\ref{eq:cosh relation}). 
Thus for $\mu$ parabolic or hyperbolic only $\left(\ref{equnstable}\right)$ holds, that is $x_e$ is $\SL(2, \mathbb R)$ unstable. Consequently, every parabolic or hyperbolic $x_e$ is $\SL(2, \mathbb R)_\mu$ unstable. If $\mu$ is elliptic, the $\SL(2, \mathbb R)_\mu$ stability results follow from Theorem \ref{thm:PRW}(2).
\end{proof}

\begin{cor}
Every hyperbolic and parabolic equilateral relative equilibrium is $\SL(2, \mathbb R)$, and \emph{a fortiori} $\SL(2, \mathbb R)_\mu$, unstable.\hfill \ensuremath{\square}
\end{cor}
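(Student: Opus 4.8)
The plan is to read the result straight off the determinant formula $(\ref{deteq})$ established in the proof of the preceding theorem, combined with the sign criterion for instability there. No new computation is required; the entire content is a short sign analysis followed by a logical contrapositive.

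First I would recall from Theorem~\ref{coadorb} that a nonzero $\mu$ is parabolic precisely when $\det\mu=0$ and hyperbolic precisely when $\det\mu<0$, so in either case $\det\mu\le 0$. Substituting into $(\ref{deteq})$ gives
\begin{equation*}
2k\sum_{i\neq j}\Gamma_i\Gamma_j+\sum_i\Gamma_i^2\le 0,
\end{equation*}
and since the vorticities are assumed nonzero we have $\sum_i\Gamma_i^2>0$, while $k=-\langle X_i,X_j\rangle_{\mathcal H}>0$ measures the common edge length. Rearranging forces $\sum_{i\neq j}\Gamma_i\Gamma_j\le -\tfrac{1}{2k}\sum_i\Gamma_i^2<0$, i.e.\ the instability inequality $(\ref{equnstable})$ holds. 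Hence the preceding theorem immediately yields that $X_e$ is $\SL(2,\mathbb R)$-unstable.

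For the \emph{a fortiori} clause I would invoke the elementary implication, noted at the start of this section, that $G'$-stability implies $G$-stability for any subgroup $G'\le G$. Taking the contrapositive with $G'=\SL(2,\mathbb R)_\mu$: since $X_e$ fails to be $\SL(2,\mathbb R)$-stable it cannot be $\SL(2,\mathbb R)_\mu$-stable either, which is the second assertion.

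The argument has no genuine obstacle; the only point requiring a moment's care is the logical direction of the final clause. One must use the contrapositive of the implication $\SL(2,\mathbb R)_\mu$-stable $\Rightarrow$ $\SL(2,\mathbb R)$-stable, rather than attempting to apply Theorem~\ref{thm:PRW}(2) directly. The latter needs a $\SL(2,\mathbb R)_\mu^0$-invariant inner product on $\mathfrak g^*$, which by the discussion following Theorem~\ref{thm:PRW} exists only when $\mu$ is elliptic (equivalently $\det\mu>0$), and so does not apply in the parabolic or hyperbolic case at hand.
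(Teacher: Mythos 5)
Your proposal is correct and matches the paper's own reasoning, which is embedded in the proof of the preceding theorem: there the authors observe from $(\ref{deteq})$ that $\det\mu\le 0$ forces $\sum_{i\neq j}\Gamma_i\Gamma_j<0$, hence $\SL(2,\mathbb R)$-instability, and then pass to $\SL(2,\mathbb R)_\mu$-instability exactly as you do, via the contrapositive of ``$G'$-stable $\Rightarrow$ $G$-stable''. Your closing remark about why Theorem~\ref{thm:PRW}(2) is not applicable here is a correct and worthwhile clarification, but the argument is the same as the paper's.
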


\begin{remark}
The expressions for $G$-stability conditions for three equilateral vortices on the hyperboloid coincide with those for the system on the plane \cite{Aref} and on the sphere \cite{Montaldi1, Pekarsky}.
\end{remark}

Aref \cite{Aref} showed that a three point vortex configuration on the plane, a relative equilibrium with $\sum\limits_{i \neq j}\Gamma_i\Gamma_j =0$ is marginally stable. Meanwhile, 
for the system on the sphere, Marsden, Pekarsky and Shkoller \cite{Pekarsky2} performed numerical integrations and observed changes of the stability for $\sum\limits_{i \neq j}\Gamma_i\Gamma_j =0$. The conjecture that a Hamiltonian bifurcation occurs has also been mentioned in the references \cite{{Montaldi1}, {Pekarsky}}. 

\begin{figure}[tb]
  \centering
  \includegraphics{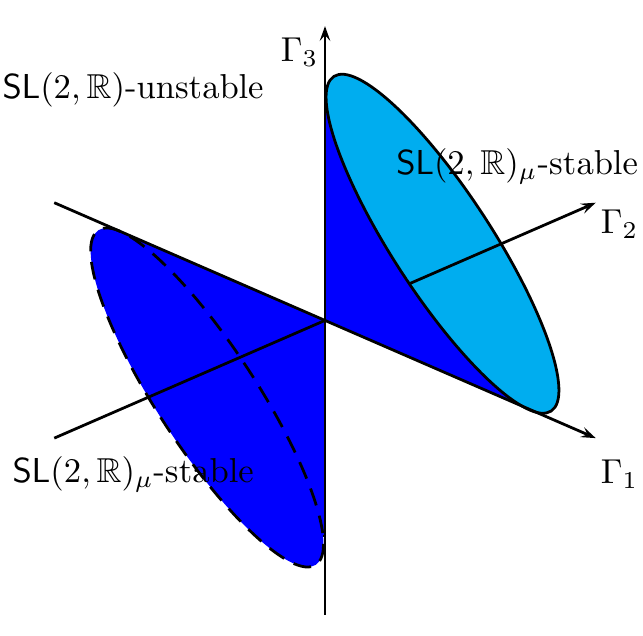}
  \caption{Graph of the cone $\sum \limits_{i\neq j} \Gamma_i \Gamma_j =0$ and corresponding stability properties.}
  \label{eqstagraph}
\end{figure}

%

We have performed numerical integrations in Maple for equilateral configurations on the hyperboloid, which suggest that a bifurcation occurs at $\sum\limits_{i \neq j}\Gamma_i\Gamma_j =0$. Note that this is actually the equation of a cone as shown in Figure \ref{eqstagraph}. The configurations for which $\sum\limits_{i \neq j}\Gamma_i\Gamma_j <0$ have a set of $\Gamma$'s outside this cone, so these points are $\SL(2, \mathbb R)$-unstable. On the other hand, any equilateral configuration with $\Gamma$'s inside the cone is a $\SL(2, \mathbb R)_\mu$-stable relative equilibrium, and follows the trajectory of an ellipse rotating around its momentum value and is therefore periodic. 

\myparagraph{Isosceles geodesic relative equilibria}

\begin{thm} 
Let $X_e=\left(X_1, X_2, X_3\right)\in \mathcal M$ be a configuration of point vortices lying on a geodesic of the hyperboloid with vorticities $\Gamma_1, \Gamma_2, \Gamma_3=\Gamma_1$ and momentum  $\mu=J\left(X_e\right)$.  Suppose  $\langle {X_1},  {X_2}\rangle_{\mathcal H}=\langle {X_2},  {X_3}\rangle_{\mathcal H}=a\neq \frac{\Gamma_2}{2\Gamma_1}$ then $\mu$ is elliptic. Furthermore, let
\begin{equation}
A\left(\Gamma_1, \Gamma_2, a\right)=\frac{1}{\Gamma_1}\Bigl(512 A_1\left(\Gamma_1, \Gamma_2, a\right)+A_2\left(\Gamma_1, \Gamma_2, a\right)\Bigr),
\label{condis}
\end{equation}
with
\begin{eqnarray*}
A_1\left(\Gamma_1, \Gamma_2, a\right)&=&\Gamma _1^2a^9-2\Gamma _1a^8\Bigl(\Gamma _1-\frac{\Gamma _2}{4} \Bigr) +a^7 \Bigl( -\frac{5}{4}\Gamma _1^2+2\Gamma _1\Gamma _2 \Bigr)  \nonumber \\
&& +2 a^6\Bigl(\Gamma _1+\frac{\Gamma _2}{4} \Bigr)  \Bigl( \Gamma _1-\Gamma _2 \Bigr)+\frac{\Gamma _1a^5}{4} \left( \Gamma _1-8\Gamma _2 \right) \nonumber\\
&& +\frac{\Gamma _2 a^4}{16}\left( 8\Gamma _2+\Gamma _1 \right) -\frac{\Gamma _1\Gamma _2}{32}\left(a^2-\frac{1}{2}\right),
\end{eqnarray*}
and
\[A_2\left(\Gamma_1,\Gamma_2, a\right)=\Gamma _1a^5+\frac{1}{2}\Gamma _2a^{4}  -\frac{5}{4}\Gamma _1{a}^{3}-\frac{11}{8}\Gamma_2{a}^{2}+\frac{1}{4}\Gamma _1a-\frac{1}{8}\Gamma_{{2}}.\]
If $A\left(\Gamma_1, \Gamma_2, a\right)>0$ then $X_e$ is $\SL(2, \mathbb R)_\mu$-stable. Conversely if $A\left(\Gamma_1, \Gamma_2, a\right)<0$ then $X_e$ is $\SL(2, \mathbb R)$-unstable.
In addition, if $X_e$ is also an equilibrium point, then $X_e$ is $\SL(2, \mathbb R)_\mu$-stable for all $a \notin (-1.191, -1.106)$. 
Finally, if $a= \frac{\Gamma_2}{2\Gamma_1}$ then $\mu=0$ and hence not a regular point.
\label{staisos}
\end{thm}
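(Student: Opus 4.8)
The plan is to handle the four assertions in turn, working throughout in the normal form (\ref{geocon}) with $x_1=x_3$, so that $a=\langle X_1,X_2\rangle_{\mathcal H}=-\sqrt{1+x_1^2}$, and combining the explicit isosceles data with the general machinery of Theorem~\ref{thm:PRW}. The ellipticity claim and the degenerate case $a=\Gamma_2/(2\Gamma_1)$ come for free from the computation already performed in the proof of Theorem~\ref{georelt}: there it is shown that an isosceles geodesic configuration is a relative equilibrium exactly when $\Gamma_3=\Gamma_1$ (part of our hypotheses), and that then $\det\mu=(2\Gamma_1\sqrt{1+x_1^2}+\Gamma_2)^2=(\Gamma_2-2\Gamma_1 a)^2$. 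By Theorem~\ref{coadorb}(1) this is strictly positive, i.e.\ $\mu$ is elliptic, precisely when $a\neq\Gamma_2/(2\Gamma_1)$, whereas $\det\mu=0$ (so $\mu=0$) when $a=\Gamma_2/(2\Gamma_1)$, which is the non-regular point of Theorem~\ref{coadorb}(2). This settles the first and last assertions.

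For the stability dichotomy I would use the explicit symplectic normal space $N_1=\langle\eta,\zeta\rangle$ of Proposition~\ref{sst}, choosing $D_1,D_2$ so that their span avoids all three vortices (e.g.\ vectors transverse to the common geodesic plane). Since $\mu$ is elliptic and hence nonzero, $N_1$ is two-dimensional. I would then evaluate the Hessian of the augmented Hamiltonian at $X_e$ and restrict it to $N_1$, obtaining a $2\times2$ symmetric form $S$. On a two-dimensional space $S$ is definite (positive or negative) iff $\det S>0$ and indefinite iff $\det S<0$, and the quantity $A(\Gamma_1,\Gamma_2,a)$ of (\ref{condis}) is $\det S$ up to a strictly positive factor. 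Equivalently, since $N_1$ carries the symplectic form $J$, the linearised reduced flow $JS$ on $N_1$ has eigenvalues $\pm\sqrt{-\det S}$: a center when $A>0$ and a real pair when $A<0$. Thus $A>0$ is exactly formal stability and $A<0$ gives spectral instability.

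The conclusions then follow from Theorem~\ref{thm:PRW}. When $A>0$ formal stability holds; as $\mu$ is elliptic it is regular and of Hausdorff type, so part~(1) yields $\SL(2,\mathbb R)$-stability, and ellipticity furnishes a $G_\mu$-invariant inner product on $\mathfrak g^*$, so part~(2) upgrades this to $\SL(2,\mathbb R)_\mu$-stability. When $A<0$ the real eigenvalue pair gives genuine spectral, hence nonlinear, $\SL(2,\mathbb R)$-instability. For the equilibrium subcase I would impose the relation (\ref{relfix}), $\Gamma_2=\Gamma_1 a/(1-a)$, and substitute it into (\ref{condis}); using that Lyapunov stability is invariant under the overall rescaling $\Gamma\mapsto\lambda\Gamma$ (a time-reparametrisation), the sign of $A$ on the equilibrium locus reduces to that of an explicit one-variable expression in $a$ on the physical range $a\in(-\infty,-1)$ (recall $a=-\sqrt{1+x_1^2}\le-1$). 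Isolating its roots shows it is negative only on a short interval with numerical endpoints $a\approx-1.191$ and $a\approx-1.106$, so $A>0$ and $X_e$ is $\SL(2,\mathbb R)_\mu$-stable for every $a\notin(-1.191,-1.106)$.

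The main obstacle is the symbolic evaluation of the restricted Hessian: assembling the second derivatives of the logarithmic Hamiltonian (\ref{hamil}) together with the constraint and multiplier terms, pairing them against $\eta$ and $\zeta$, and simplifying the resulting $2\times2$ determinant into the closed form (\ref{condis}) is a lengthy computation best carried out in a computer algebra system (as recorded in \cite{thesis}). Beyond this the only points requiring care are confirming that an indefinite reduced Hessian produces the real eigenvalue pair responsible for $\SL(2,\mathbb R)$-instability, and the final numerical root isolation that pins down the interval $(-1.191,-1.106)$.
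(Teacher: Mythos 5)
Your proposal follows essentially the same route as the paper: ellipticity and the degenerate case are read off from the determinant computation in the proof of Theorem~\ref{georelt}, the stability dichotomy comes from testing definiteness of the Hessian of the augmented Hamiltonian~(\ref{extended}) on the symplectic normal space of Proposition~\ref{sst} and invoking Theorem~\ref{thm:PRW}, and the equilibrium subcase is handled by substituting~(\ref{relfix}) and isolating roots in $a$. The only differences are cosmetic (the paper fixes $D_1=X_1+X_2$, $D_2=(0,1,0)$ rather than a generic transverse pair, and you spell out the $\det S$/eigenvalue reasoning that the paper leaves implicit), so your argument is correct and matches the published proof.
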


\begin{proof}
In the proof of Theorem \ref{georelt} we showed that $\det \mu>0$ for all $a\neq\frac{\Gamma_2}{2\Gamma_1}$, otherwise $\mu=0$. By Theorem \ref{sst}, a symplectic normal space $N_1$ to $X_e$ is generated by
\begin{equation*} 
\begin{aligned}[c]
\eta:=\left(\begin{array}{c}
\frac{1}{\Gamma_1}\left(D_1\times_{\mathcal{H}} {X_1} \right)\\ 
\frac{1}{\Gamma_2}\left(D_1\times_{\mathcal{H}} {X_2} \right)\\ 
\left(0,0,0\right)%
\end{array}\right)
\end{aligned}
\qquad \text{and} \qquad
\begin{aligned}[c]
\zeta:=\left(\begin{array}{c}
\frac{1}{\Gamma_1}\left(D_2\times_{\mathcal{H}} {X_1} \right)\\ 
\frac{2k}{\Gamma_2}\left(D_2\times_{\mathcal{H}} {X_2} \right)\\
\frac{1}{\Gamma_3}\left(D_2\times_{\mathcal{H}} {X_3} \right)\\
\end{array}\right),
\end{aligned}
\end{equation*}
where $D_1={X_1}+{X_2}$, $D_2=\left(0,1,0\right)$ and $k=-\sqrt{x_1^2+1}$. Given that $\mu$ is regular and elliptic, the stability condition $\left(\ref{condis}\right)$ is derived from testing the definiteness of the Hessian of $\left(\ref{extended}\right)$ restricted to $N_1$. 

Recall from Section \ref{relthreesec}, that an isosceles geodesic relative equilibrium is also in equilibrium provided $\Gamma_2=\frac{\Gamma_1a}{1-a}$. Given that $a<-1$ for all $X_e$, we obtain $A\left(\Gamma_1, \Gamma_2, a\right)>0$ for $a \notin \left(-1.191, -1.106\right)$.
\end{proof}

\myparagraph{Zero momentum}
The symplectic normal space of $\mu=0$ is trivial. As discussed in Example $4$ of \cite{PRW04}, a configuration with $\mu=0$ is trivially leafwise stable and, $G$-stable if the angular velocity $\xi$ points into the null-cone $\mathfrak C$. Simple calculations show that the angular velocity of a geodesic configuration with momentum value $\mu=0$ is always elliptic, hence $\mu=0$ is $\SL(2, \mathbb R)$-stable.

Despite the complexity of $\left(\ref{condis}\right)$, we can get an idea of the stability of $X_e$ by looking at Figure \ref{a-g2}, where the stability regions for $\Gamma_1=\Gamma_3=1$ are plotted.  The second diagram is a close up of the bottom-right portion of the first diagram and the stability conditions can be seen with more detail for vortices that are close to each other, that is for small values of $a$. The  dashed blue line represents $a=\frac{\Gamma_2}{2\Gamma_1}$, in which case $\mu=0$ and $X_e$ is $SL\left(2,\mathbb R\right)$-stable as the angular velocity $\xi$ is elliptic. 

\begin{figure}[tb]
  \centering
          \includegraphics[width=\textwidth]{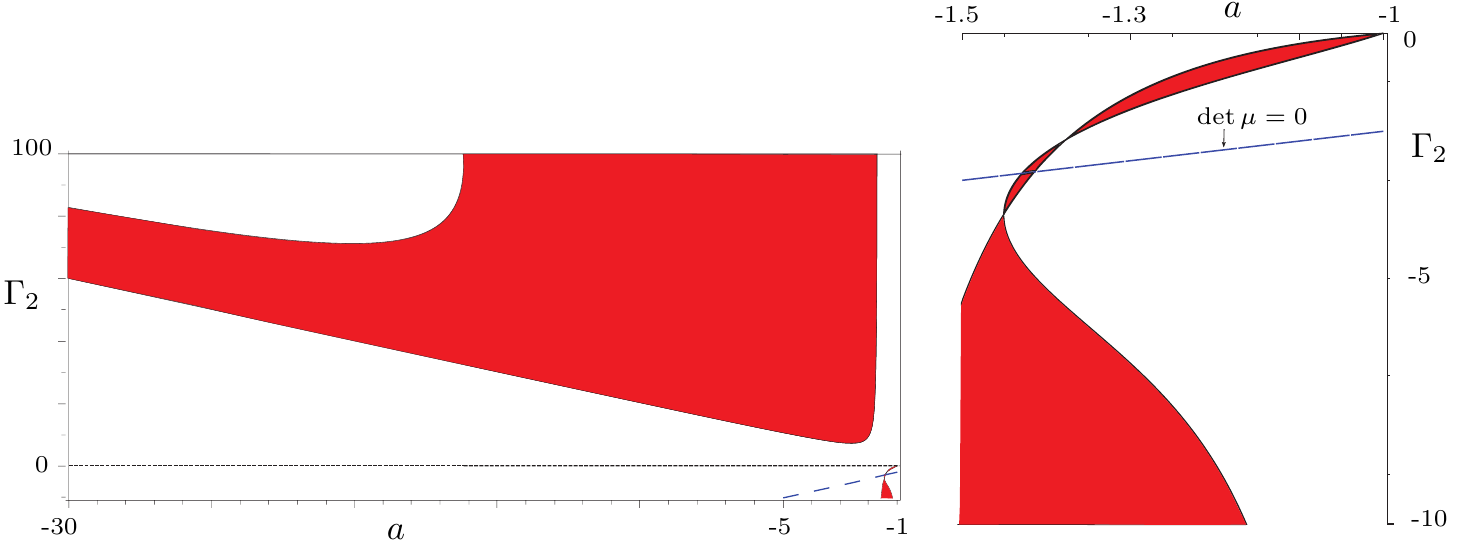}
    \caption{Stability of isosceles geodesic relative equilibria for $\Gamma_1=\Gamma_3=1$ as a function of $\Gamma_2$ and $a$. The white region represents $\SL(2, \mathbb R)_\mu$-stable relative equilibria, and the red region represents $\SL(2, \mathbb R)$-unstable relative equilibria}
    \label{a-g2}
  \end{figure}
  
  \myparagraph{Geodesic configuration with three different lengths}
Additional information of this general case can be found in \cite{thesis}. For this type of configuration the computation of the Hessian is rather involved, and further analysis is required to provide general stability criteria for relative equilibria of this system.  Nevertheless it is of particular use the fact that as for the isosceles case, a relative equilibrium must have either elliptic or zero momentum value. 

\myparagraph{Acknowledgements} We would like to thank the referee for comments which helped improve the presentation. 
C~N-G was supported by CONACyT grant 307762.

\let\oldbibliography\thebibliography
  \renewcommand{\thebibliography}[1]{%
  \oldbibliography{#1}
  \setlength{\itemsep}{0pt}
  \small
  } 

\end{document}